\newtheorem{theo}{Theorem}[section]
\newtheorem{theo-app}{Theorem}[section]
\newtheorem{theorem}[theo]{Theorem} 
\newtheorem{corollary}[theo]{Corollary}
\newtheorem{proposition}[theo]{Proposition}
\theoremstyle{definition}
\newtheorem{example}[theo]{Example}
\newtheorem{remark}[theo]{Remark}
\newtheorem*{remark*}{Remark}
\newtheorem{definition}[theo]{Definition}
\newtheorem{notation}[theo]{Notation}
\numberwithin{equation}{section}
\renewcommand{\hat}{\widehat}
\newcommand{\sR}{\mathscr{R}}
\DeclareMathOperator{\cl}{cl}
\newcommand{\R}{\mathbb{R}}
\newcommand{\N}{\mathbb{N}}
\newcommand{\C}{\mathbb{C}}
\newcommand{\Z}{\mathbb{Z}}
\DeclareMathOperator{\var}{var}
\DeclareMathOperator{\Int}{Int}
\DeclareMathOperator{\diam}{diam}
\DeclareMathOperator{\dist}{Dist}
\DeclareMathOperator{\Crit}{{\rm Crit}}
\DeclareMathOperator{\Comp}{{\rm Comp}}
\DeclareMathOperator{\sep}{{\rm sep}}
\DeclareMathOperator{\tree}{{\rm tree}}
\DeclareMathOperator{\Const}{{\rm Const}}
\DeclareMathOperator{\HD}{{\rm HD}}
\DeclareMathOperator{\hyp}{{\rm hyp}}
\def\cH{\mathscr{H}}
\def\cP{\mathscr{P}}
\def\e{\varepsilon}
\def\ov{\overline}
 \def\dist{{\rm {dist}}}
\DeclareMathSymbol{\varnothing}{\mathord}{AMSb}{"3F}
\renewcommand{\emptyset}{\varnothing}
\author[F. Przytycki]{Feliks Przytycki$^\dag$} \address{Institute of Mathematics, Polish Academy of Sciences, ul. \'{S}niadeckich 8, 00-656 Warszawa, Poland}
\email{feliksp@impan.pl}
\thanks{Supported partially by National Science Centre, Poland, Grant OPUS21 "Holomorphic dynamics, fractals, thermodynamic formalism",
2021/41/B/ST1/00461.}
\begin{document}

\date{\today}

\title[McMullen's and geometric pressures]{McMullen's and geometric pressures and approximating Hausdorff dimension of Julia sets from below}

\keywords{iteration of rational maps, Julia sets, Hausdorff dimension, geometric pressure, McMullen's pressure, thermodynamic formalism}

\subjclass[2000]{Primary: 37F35; Secondary: 37F10}

\maketitle

\begin{abstract} We introduce new variants of the notion of geometric pressure for rational functions on the Riemann sphere, including non-hyperbolic functions, in the hope some of them occur useful to achieve a fast approximation from below of the hyperbolic Hausdorff dimension of Julia set.
\end{abstract}

\tableofcontents

\section{Introduction}\label{Introduction}

Iterating the action of a mapping $f$, (roughly) increasing distances, for example a rational function on a neighbourhood of its "chaotic" invariant Julia set, leads its small subsets to large ones, roughly preserving shapes (dynamical "escalator"). So a long time behaviour under the action of $f$, provides an insight into the local structure of $J(f)$, here, in complex dimension 1, into its Hausdorff dimension characteristic.


A tool is a \emph{geometric pressure} with respect to the potential
$\phi=\phi_t = -t\log |f'|$ on Julia set, here for $t >0$.
The pressure (free energy) can be defined for any $\phi$  in a variational way
\begin{equation}\label{varpres}
P_{\var}(f,\phi)=\sup_\mu (h_\mu(f)+\int \phi\, d\mu),
\end{equation}
supremum over all $f$-invariant probability measures $\mu$ on $J(f)$. $h_\mu(f)$ means the measure-theoretical
(Kolmogorov's) entropy and $\mu$ for which the supremum is attained (if it exists) an equilibrium state.
Below we provide different definitions.
There is an analogy with equilibria in statistical physics, e.g. Ising model of ferromagnetic, where equilibria
are distributed on the space of all configurations of + or - over $\Z^2$ with a potential depending on a hamiltonian function  expressed in terms of interactions  between elements of the configuration.

 The founders of applications in dynamics are in particular Y. Sinai, D. Ruelle and R. Bowen (SRB measures). Here we consider forward trajectories so the "configurations" are over $\N$.  In particular a geometric application with the use of $\phi_t$, hence  $\exp S_n(\phi_t)=|(f^n)'|^{-t}$,
is to relate (roughly) an equilibrium measure (a mass) of a disc of diameter $|(f^n)'|^{-1}$ with this diameter, for each $t$ up to a normalizing coefficient
$\exp nP(f,\phi_t)$, equal to 1 if $t=t_0$ a zero of the pressure $P(f,\phi_t)$, denoted also $P(f,t)$.
This $t_0$ is  called hyperbolic Hausdorff dimension of $J(f)$, $\HD_{\hyp}(f,t)$, defined as the supremum of Hausdorff dimensions
of invariant hyperbolic subsets of $J(f)$.

An introduction to this theory is provided in
\cite{PUbook}. Closer to the content of the paper are  \cite{P-TAMS2} and \cite{PRS}, where geometric  pressure
 for general rational functions was first defined and studied. See also \cite{P-ICM}.

The aim of this note is to introduce more variants of this notion, in particular some, close to McMullen's
pressure defined for hyperbolic rational functions in \cite{McM-HD3}, useful to numerically calculate Hausdorff dimension of the underlying Julia sets, estimate it from below.

The pressure function $t\mapsto P(f,t)$ will occur to be a limit from below of a sequence of functions specific to each notion of the pressure we introduce. Therefore their first zeros will converge to $\HD_{\hyp}(f,t)$ from below. There are two concepts in these notions of pressure. 1. To replace a potential along each trajectory by a
"fuzzy" one, mainly by replacing the value at a point by the infimum of the value in its small neighbourhood, or
the smaller one at one of two sampling points close to it. 2. To restrict the pressure to trajectories not passing too close to the set of critical points.

The key issue we do not address here is how efficient is the calculation of these functions and how fast are the convergences. This will be dealt with in \cite{DGT}.

\

\emph{Acknowledgements.}
A direct stimulus for me to write this note has been listening to Artem Dudko's talks on estimating Hausdorff dimension of Feigenbaum Julia set from below, at IM PAN dynamical systems seminar, \cite{DGT}. I am indebted to him for fruitful discussions and suggestions.

\

\section{Definitions, the statement of a main theorem}\label{definitions}

\

\subsection{Topological pressures}\label{topological pressures}
Start recalling the basic definition of the topological pressure for a continuous transformation of a metric compact space
and real continuous potential, see e.g. \cite{Walters} or \cite{PUbook}.

\begin{definition}[topological pressure via separated sets]\label{topological pressure}
Let $f:X\to X$ be a continuous map and  $\phi:X\to\R$ a continuous real-valued function (potential) on $X$.  Consider for $S_n$ defined below, in \eqref{fuzzy},
\begin{equation}\label{Psep}
P_{\rm {sep}}(f,\phi):=\lim_{\e\to 0}\limsup_{n\to\infty}\frac1n \log \big(\sup_Y \sum_{y\in Y}\exp S_n\phi(y)\big),
\end{equation}
where the supremum is taken over all $(n,\e)$-separated sets $Y\subset X$, that is such $Y$ that for every $y_1,y_2\in Y$ with $y_1\not=y_2$, $\rho_n(y_1,y_2)\ge \e$, where $\rho_n$ is the metric defined by $\rho_n(x,y)=\max\{\rho(f^j(x),f^j(y)): j=0,\dots,n\}$.
\end{definition}

We can slightly modify this definition, defining a \emph{fuzzy pressure} or \emph{inf-pressure} $P^0_{\sep}(f,\phi)$ by
first replacing  in
\begin{equation}\label{fuzzy}
S_n\phi(y):=\sum_{j=0}^{n-1} \phi(f^j(y))
\end{equation}
in \eqref{Psep}, $\phi(x)$ for $x=f^j(y)$ for each $j$, by
$\inf\{\phi(z): z\in B(x,\delta)\}$
thus defining
$$
S^\delta_n\phi(y):=\sum_{j=0}^{n-1}   \inf\{\phi(z): z\in B(f^j(y),\delta)\}
$$
writing $P_{\sep}^\delta(f,\phi)$, and at the end defining
$$
P^0_{\sep}(f,\phi):=\lim_{\delta\to 0}P_{\sep}^\delta(f,\phi).
$$
By the uniform continuity of $\phi$, an easy calculation gives $P_{\sep}(f,\phi)= P^0_{\sep}(f,\phi)$.

A related notion is \emph{tree pressure} (or \emph{Gurevitch pressure}), interesting for a non-invertible $f$,  defined for an arbitrary $z\in X$ by

\begin{equation}\label{Ptree}
P_{\tree}(f,\phi,z)=\limsup_{n\to\infty}\frac{1}{n}\log\sum_{y\in f^{-n}(z)} \exp S_n\phi (y).
\end{equation}
which also can be defined in a "fuzzy" way as a \emph{fuzzy tree pressure},
another name \emph{infimum tree pressure},
\begin{equation}\label{P0}
P^0_{\tree}(f,\phi,z):=\lim_{\delta\to 0} P^\delta_{\tree}(f,\phi,z)
\end{equation}
where
\begin{equation}\label{Pdelta}
P^\delta_{\tree}(f,\phi,z):=\limsup_{n\to\infty}\frac{1}{n}\log\sum_{y\in f^{-n}(z)} \exp S^\delta_n\phi (y).
\end{equation}
Later on we shall use the following easy, \cite[Chapter 4]{PUbook},

\smallskip

\begin{remark}\label{easy}
Suppose $f:X\to X$ is open distance expanding topologically exact, namely foe every open $U\subset X$ there exists $n\in\N$ such that $f^n(U)=X$, and $\phi:X\to\R$ be continuous.
Then all the above pressures coincide and are independent of $z$. So we can denote them just by $P(f,\phi)$.
\end{remark}

\medskip

\subsection{Geometric pressures}\label{geometric pressures}
From now on we shall consider a rational transformation of the Riemann sphere  $f:\ov\C \to \ov\C$ and its restriction to  the Julia set $J(f)$.
We shall consider geometric potentials $\phi=\phi_t= -t\log|f'|$ for $t>0$.
Note that in this case we can write $\exp S_n\phi_t$ in Definition \ref{topological pressure}, in the form
$|(f^n)'|^{-t}$. The derivative $f'$ will be considered only with respect to  the spherical Riemann metric. We shall use only its absolute value $|f'|$ so there will be no ambiguity caused by its argument. The points $x\in\ov\C$ where $f'(x)=0$ are called \emph{critical} and their set denoted by  $\Crit(f)$. For $c\in \Crit(f)$ where in the complex plane coordinates $f(z)= a(z-c)^\nu +  b(z-c)^{\nu+1}+...$, with $a\not=0$, $\nu=\nu(c)$ is called the multiplicity of $f$ at $c$. We shall consider also the \emph{post-critical set}, \;
${\rm PC}(f):=\bigcup_{n=1}^\infty f^n(\Crit(f))$.

\smallskip

If the forward trajectory of no critical point accumulates at $J(f)$, that is there are no $f$-critical points in $J(f)$, nor attracted to parabolic periodic orbits, then $f|_{J(f)}$ is open expanding. Another term for this is
\emph{hyperbolic}).  It means there exist $C>0,\lambda>1$ such that $|(f^n)'(z)|>C\lambda^n$ for all $z\in J(f)$ and $n\in\N$. This  is an easy case, covered by Remark \ref{easy}.

\bigskip

We shall consider from now on a general case with critical points whose forward trajectories can accumulate at $J(f)$.
The above definitions of $P_{\tree}(f,\phi,z)$ and $P^0_{\tree}(f,\phi,z)$ above make sense
even though $\phi_t$ is infinite at critical points (yielding $\infty$) and for $z$ outside $J(f)$. The pressure $P_{\tree}(f,\phi_t,z)$ does not depend on $z\in\ov\C$ for non-exceptional $z$ (not in or fast accumulated by ${\rm PC}(f)$, see Definition \ref{non-exceptional}). See \cite{P-TAMS2} and \cite{PRS}.
So we can denote it just by $P_{\tree}(f,t)$. The independence of $P^0_{\tree}(f,\phi,z)$ of non-exceptional $z$ also holds, via $P_{\hyp}(f,t)$ and $P_{\tree}(f,t)$, see Main Theorem \ref{ss:MT}, item 1.

\smallskip

To fix notation for $\phi=\phi_t=-t\log |f'|^{-t}$ let us
rewrite:

\begin{notation}[geometric tree pressure]\label{PtreeW}

\begin{equation}\label{gtp}
P_{\tree}(f,t,z)=P_{\tree}(f,\phi_t,z):=\limsup_{n\to\infty}\frac{1}{n}\log \sum_{v\in f^{-n}(z)}\Pi_n(t,v)
\end{equation}
where
\begin{equation}
\Pi_n(t,v):=\prod_{k=1}^{n}  |f'(f^{n-k}(v))|^{-t}\}\bigr)=|(f^n)'(v)|^{-t}.
\end{equation}
\end{notation}
and
\begin{notation}[geometric fuzzy tree pressure]\label{gftp}
\begin{equation}\label{eq:gftp}
P_{\tree}^0(f,t,z):=\lim_{\delta\to 0}
\limsup_{n\to\infty}\frac{1}{n}\log\sum_{v\in f^{-n}(z)} \prod_{k=0}^n \inf\{|f'(y)|^{-1}: y\in B(f^{n-k}(v),\delta)\}.
\end{equation}
\end{notation}

We shall introduce also a new notion
\begin{definition}[pullback infimum tree pressure]\label{def:pullinf}
\begin{equation}\label{Pi-inf}
P_{\tree}^{\rm pullinf}(f,t,z):=\lim_{r\to 0}\limsup_{n\to\infty}\frac{1}{n}\log\sum_{v\in f^{-n}(z)} \Pi^{\rm pullinf}_n(t,v)
\end{equation}
where
\begin{equation}\label{Pi-inf2}
\Pi^{\rm pullinf}_n(t,v):=\prod_{k=1}^{n} \inf\{|f'(y)|^{-t} :
  y\in \Comp_{f^{n-k}(v)} f^{-k}(B(f^n(v),r))\}
\end{equation}
where $\Comp_x$   means the component of a set containing $x$.

The limit for $r\to 0$ exists since the family under it is monotone increasing as $r\to 0$ because infima in
\eqref{Pi-inf2} are taken on shrinking sets.
\end{definition}



\bigskip

Let the following definition of $P(f, -t\log |f'|)$, called \emph{hyperbolic pressure}  be considered as a default one, to be denoted $P(f,t)$, see e.g. \cite{P-ICM}:

\begin{definition}[Hyperbolic pressure]\label{hyperbolic pressure}
$$
P(f,t)=P_{\hyp}(f,t):= \sup_{X\in\cH(f,J(f))} P(f|_{X},-t\log|f'|) ,
$$
where $\cH(f,J(f))$ is defined as the space of all compact forward $f$-invariant (that is $f(X)\subset X$)
hyperbolic topologically exact subsets of $J(f)$ and repelling for $f$, that is if a forward trajectory of a point is in a sufficiently small neighbourhood of $X$ then it is entirely in $X$. The repelling assumption can be omitted without influencing the resulting $P_{\hyp}(f,t)$.
\end{definition}

\subsection{Hausdorff dimension}\label{ss:HD}
From this definition and Bowen's formula for every hyperbolic set $X$, saying that
Hausdorff dimension $\HD(X)$ is the only zero of the function $t\mapsto P(f|_X,t)$, see \cite{Bowen},
it immediately follows that:


\begin{proposition}\label{hypdim}{\rm (Generalized Bowen's formula)}
The first zero $t_0$ of $t\mapsto P_{\hyp}(t)$ is equal
to the hyperbolic dimension $\HD_{\hyp} (J(f))$, defined by
$$
\HD_{\hyp} (J(f)):=\sup_{X\in\cH(f,K)} \HD(X).
$$
\end{proposition}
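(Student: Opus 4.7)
The plan is a two-sided squeeze using Bowen's formula applied to each hyperbolic $X \in \cH(f, J(f))$, combined with the fact that both $P_{\hyp}(f,t)$ and $\HD_{\hyp}(J(f))$ are suprema over the same family $\cH(f, J(f))$. For each such $X$, the function $t \mapsto P(f|_X, -t\log|f'|)$ is continuous and strictly decreasing in $t$: strict monotonicity follows because $f|_X$ is expanding, so $\log|f'|$ is bounded below by a positive constant along a suitable iterate; continuity is standard. Bowen's formula then identifies the unique zero of this function as $t = \HD(X)$.

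First I would verify that $P_{\hyp}(f,t) > 0$ for every $t < \HD_{\hyp}(J(f))$. By the supremum definition of $\HD_{\hyp}(J(f))$, pick $X \in \cH(f, J(f))$ with $\HD(X) > t$. Strict monotonicity of $s \mapsto P(f|_X, -s\log|f'|)$ then gives $P(f|_X, -t\log|f'|) > P(f|_X, -\HD(X)\log|f'|) = 0$, hence $P_{\hyp}(f,t) > 0$ and so $t_0 \geq \HD_{\hyp}(J(f))$.

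Conversely, I would check that $P_{\hyp}(f,t) \leq 0$ for every $t \geq \HD_{\hyp}(J(f))$. Every $X \in \cH(f, J(f))$ satisfies $\HD(X) \leq \HD_{\hyp}(J(f)) \leq t$, so Bowen's formula combined with monotonicity gives $P(f|_X, -t\log|f'|) \leq 0$; taking the supremum yields $P_{\hyp}(f,t) \leq 0$, hence $t_0 \leq \HD_{\hyp}(J(f))$.

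The only delicate step will be showing that $P_{\hyp}(f,\cdot)$ actually attains the value $0$ at $\HD_{\hyp}(J(f))$, rather than jumping discontinuously there from positive to negative. This is handled by convexity: each $t \mapsto P(f|_X, -t\log|f'|)$ is convex in $t$ (standard for topological pressure in the potential parameter), so the supremum $P_{\hyp}(f,\cdot)$ is convex and therefore continuous on the interior of its finite domain. The two one-sided bounds then pinch $P_{\hyp}(f, \HD_{\hyp}(J(f)))$ to zero, and by monotonicity no smaller value of $t$ can be a zero.
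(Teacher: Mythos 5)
Your proof is correct and follows essentially the same route as the paper, which simply invokes Bowen's formula for each hyperbolic $X\in\cH(f,J(f))$ together with the monotonicity of $t\mapsto P(f|_X,-t\log|f'|)$ and notes that the zero exists because the supremum of these decreasing functions is decreasing. Your added care about strict monotonicity (via expansion) and continuity of $P_{\hyp}(f,\cdot)$ (via convexity of suprema of pressure functions) just fills in the details the paper leaves implicit in ``it immediately follows.''
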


\smallskip

Note that this zero exists, since all $P(f|_{X},-t\log|f'|)$ are decreasing, hence their limit
$P_{\hyp}(f,t)$ is decreasing and  $\HD(X)\le 2$ since $X\subset \ov\C$ of dimension 2.

\

\subsection{Main Theorem}\label{ss:MT}

We shall prove in this paper the following (some definitions to be provided later on)

\begin{theorem}[Main Theorem]\label{equality-sup-pressure}
 \  Let $f:\ov\C\to\ov\C$ be a rational function \emph{backward uniformly asymptotically stable}.
  Then

  1.  $P(f,t)=P_{\tree}(f,t,z)=P_{\tree}^0(f,t,z)$ for all non-exceptional $z\in\ov\C$

  2. $P(f,t)=\widehat{P_{\rm McM}}(f,t)=P^0_{\rm McM}(f,t)\le P_{\rm McM}(f,t)  $ (restricted and fuzzy  McMullen's pressures) provided
  for  the puzzle structure $\cP$ in the definition of McMullen's pressures the diameters of the puzzle pieces  of the renormalizations ${\sR}^N(\cP)$ tend uniformly to 0 as $N\to\infty$.


3. $P(f,t)=P_{\tree}^{\rm pullinf}(f,t)$ for all $r$ small enough, provided $f$ is \emph{backward uniformly asymptotically stable}.
Moreover for all $r$ small enough, all non-exceptional $z$ and every backward trajectory $(z_n)$ of $z$ (that is $f(z_n)=z_{n-1}$ for all $n\in\N$, and $z_0=z$)
\begin{equation}\label{tele} \frac1n \log
\Bigl(\frac     {\Pi^{\rm pullinf}_n (t,z_n)}    { |(f^n)'(z_n)|^{-t}} \Bigr)
\to 0
\end{equation}
as $n\to\infty$, uniformly with respect to the backward trajectory $(z_n)$, where $z_0=z$.
\end{theorem}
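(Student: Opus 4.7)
The plan is to prove each identity by sandwiching the new variant between the plain tree pressure and the hyperbolic pressure, reducing to the known equality $P_{\hyp}(f,t)=P_{\tree}(f,t,z)$ for non-exceptional $z$ from \cite{P-TAMS2,PRS}. In every case one direction is immediate from the pointwise inequality $\inf_K|f'|^{-t}\le |f'(x)|^{-t}$ whenever $x\in K$ (applied to $\delta$-balls, puzzle pieces, or pullback components), and the substantive work is to reverse it by showing that the infimum over a pulled-back dynamical neighbourhood differs from the value at its reference point only subexponentially. The hypothesis of \emph{backward uniform asymptotic stability} -- that $\diam\Comp_{z_n}f^{-n}(B(z,r))\to 0$ uniformly along every backward orbit -- will furnish exactly this control, via Koebe distortion on components eventually disjoint from $\Crit(f)$.

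For Part~1, the inequality $P^0_{\tree}\le P_{\tree}$ is immediate, while the reverse holds because backward orbits staying at distance $\ge \delta$ from $\Crit(f)$ give bi-Lipschitz comparable potentials and the contribution of orbits coming $\delta$-close to $\Crit(f)$ is exponentially negligible as $\delta\to 0$ -- the same mechanism that underlies the non-fuzzy identity $P_{\hyp}=P_{\tree}$. Part~2 runs along the same lines with the puzzle structure $\sR^N(\cP)$ playing the role of $\delta$-balls: the diameter hypothesis $\diam\sR^N(\cP)\to 0$ replaces $\delta\to 0$, the restricted $\widehat{P_{\rm McM}}$ discards pieces too close to $\Crit(f)$ and so is bounded below by $P_{\hyp}$ via Bowen-style approximation on the remaining hyperbolic subset, and $\widehat{P_{\rm McM}}\le P^0_{\rm McM}\le P_{\rm McM}$ hold by construction.

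Part~3 is the technical heart and proceeds through the telescoping estimate \eqref{tele}. The inequality $P^{\rm pullinf}_{\tree}\le P_{\tree}=P_{\hyp}$ is immediate term by term. For the reverse, given a backward trajectory $(z_n)$, I would write
\begin{equation*}
\log\frac{\Pi^{\rm pullinf}_n(t,z_n)}{|(f^n)'(z_n)|^{-t}}=\sum_{k=1}^{n}\log\frac{\inf_{y\in W_k}|f'(y)|^{-t}}{|f'(f^{n-k}(z_n))|^{-t}},
\end{equation*}
where $W_k=\Comp_{f^{n-k}(z_n)}f^{-k}(B(z_0,r))$. Each summand is $\le 0$, and by backward uniform asymptotic stability $\diam W_k\to 0$ uniformly in the orbit, so Koebe distortion on components disjoint from $\Crit(f)$ makes each summand $o(1)$ as $k$ increases. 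Dividing by $n$ yields \eqref{tele}, and summing the resulting termwise comparison over $v\in f^{-n}(z)$ converts it into $P^{\rm pullinf}_{\tree}(f,t,z)\ge P_{\tree}(f,t,z)$, closing the loop.

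The main obstacle will be making the Cesàro estimate uniform across levels $k$ at which $W_k$ happens to contain a critical point: at such levels Koebe fails, and one must exploit the \emph{uniformity} in the hypothesis to bound either the number of such levels along any backward orbit, or the distortion they contribute, by $o(n)$ rather than $O(n)$. Absorbing these critical-level corrections into a subexponential factor uniformly in the backward trajectory is precisely what the word ``uniformly'' in backward uniform asymptotic stability is designed to provide, and this is where the bulk of the argument will live.
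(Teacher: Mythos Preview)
Your overall sandwich strategy---bounding each variant between $P_{\hyp}$ and $P_{\tree}$ and invoking the known equality from \cite{P-TAMS2,PRS}---is exactly the paper's architecture, and your treatment of Parts~1 and~2 is close enough (though for Part~1 the paper simply bounds $P^0_{\tree}$ below by $P(f|_X,t)$ for a hyperbolic $X$, using only backward orbits inside $X$; it never needs your claim that orbits coming $\delta$-close to $\Crit$ are ``exponentially negligible'').

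The real gap is in Part~3. Your termwise decomposition is correct, but the assertion that ``each summand is $o(1)$ as $k$ increases'' fails precisely at the levels you flag as an obstacle, and more severely than you indicate. At a level $k$ where $W_k$ contains a critical point $c$, one has $\dist(z_k,c)\le\diam W_k$, so $|f'(z_k)|\lesssim(\diam W_k)^{\nu-1}$ while $\sup_{W_k}|f'|$ stays bounded; hence the $k$-th summand is of order $-t(\nu-1)\log(1/\diam W_k)$, which tends to $-\infty$ by (buas). Thus bounding the \emph{number} of critical levels by $o(n)$ is not enough: the individual contributions are unbounded, and there is no evident reason their sum is $o(n)$. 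The paper avoids this entirely by \emph{not} comparing term by term. Instead it telescopes through the diameters $\diam W_{k_j}$ at the critical times $k_j$ only, using Koebe on each block $f^{k_{j+1}-k_j-1}$ to get a \emph{single} bounded-distortion constant per block (so the total distortion is $C^{\#\{j:k_j\le n\}}$, subexponential since $k_{j+1}-k_j\to\infty$), and then---crucially---invokes the non-exceptionality of $z$ to bound $\diam W_n$ from below by $(\exp(-n\epsilon))|(f^n)'(z_n)|^{-1}$ via the univalent branch $f^{-n}$ on $B(z,\exp(-n\epsilon))$. You never use non-exceptionality beyond avoiding post-critical $z$, and without that second step the argument does not close.
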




\smallskip

The equalities in the item 1. say in particular that the pressures there do not depend on $z$ for all non-exceptional $z$



\begin{definition}\label{back-stab}
 $f$ is said to be \emph{backward uniformly asymptotically stable}, abbreviated: (buas), if
there exists $r_0>0$ such that for each  $z\in J(f)$, Julia set,  diameters
of all \emph{pullbacks} of $B(z,r_0)$ (namely: components of preimages) for $f^n$ tend to 0 uniformly fast, with respect to $z$
and to the pullback, as $n\to\infty$.
\end{definition}

Notice that this property is hereditary, that is if it holds for $r$, then it holds for every $\tilde r \le r$.

\begin{definition}\label{non-exceptional}
We call a point $z\in \ov\C$ \emph{non-exceptional} if
for each $\epsilon>0$ and $n$ large enough
$B(z, \exp -n\epsilon)$ is disjoint from $f^j(\Crit(f))$ for all $j=1,...,n$.  In particular
$z$ is not \emph{post-critical} that is $z\notin {\rm PC}(f)=\bigcup_{n=1}^\infty f^n(\Crit(f))$.
The other points are called \emph{exceptional} and the set of all exceptional points is denoted by $E$.

\end{definition}








It is clear from the definition that the exceptional set $E\subset \widehat{\C}$ has  Hausdorff dimension $0$.
 Sometimes it is comfortable to consider $z\in \ov\C\setminus J(f)$, so close to $J(f)$ that it cannot be postcritical itself neither be accumulated by the postcritical set. Then of course it must be non-exceptional.

\medskip

\begin{remark}
$\limsup_{n\to\infty}$ in the definition \eqref{Ptree}  can be replaced by lim, which always exists, see \cite[Remark 12.5.18]{PUbook}.
 So the limits exist also in the definition of $P^{\rm pullinf}_{\tree}(f,t,z)$ for $z\in J(f)$, provided the property  (buas).
  It will follow easily from the proof of Theorem \ref{equality-sup-pressure}, see \eqref{tele} and \eqref{individual}.
  The same holds for $P^{\rm infW}_{\tree}(f,t,z)$, to be defined in \eqref{PsupW}.
\end{remark}


\

\section{Fuzzy (infimum) and pullback infimum tree pressures}\label{sec:fuzzy-inf}

\begin{proof}[Proof of Theorem \ref{equality-sup-pressure}, the item 1. ]

It follows immediately from known theory and easy observations.


 Indeed, the inequality $P_{\hyp}(f,t)\le P_{\tree}(f,t,z)$ follows from the trivial observation
 that for every  $X\in \cH(f,J(f))$ in Definition \ref{hyperbolic pressure} and $z\in X$,
 $$
 P_{\tree}(f|_X,-t\log|f'| \le P_{\tree}(f, -t\log|f'|, z)
 $$ since in the former pressure we count only backward
 trajectories in $X$ of a non-exceptional $z\in X$, whereas in the latter one all in $J(f)$.
 A non-exceptional $z\in X$ exists since $\HD_{\hyp}(J(f))>0$ hence in its definition it is sufficient to consider only $X$ satisfying $\HD(X)$, and $\HD(E)=0$.

 The same reasoning holds for
 $P_{\hyp}(f,t)\le P_{\tree}^0(f,t)$, defined in \eqref{eq:gftp}. Take in account that for every backward trajectory $(z_n)$ of $z$ in $X$ we have
 $\log|f'(z_k)\le \log|f'(v)|+\epsilon$ for every $v\in\ov\C$ with $\rho(x_k,v)\le \delta$,
 for every $\epsilon$ and $\delta$ small enough. This is so because $X$ is disjoint hence, as being compact, bounded away from $\Crit f$.

 The inequality
 $P_{\tree}^0(f,t,z)\le P_{\tree}(f,t, z)$ holds trivially  for every  $z$
since $\inf\{v\in B(z_k,\delta): |f'(y)|^{-1}\}
 \le |f'(z_k)|^{-1}$ for all $k$. The latter yields also the monotone increasing of
 $P_{\tree}^\delta (f,t,z)$ as $\delta\to 0$.

The proof that $P_{\tree}(f,t, z) \le P_{\hyp}(f,t)$ for non-exceptional $z\in J(f)$ is harder, fortunately it is known.
First one assumes that $z$ is non-exceptional and additionally \emph{hyperbolic}, which means by definition that there exists $r>0$ and $\lambda>1$ such that for every disc $B(z,\tau)$ there exists $n\in N$ such that $f^n|_{B(z,\tau)}$ is univalent and $f^n(B(z,\tau))\supset B(f^n(z), r)$ \cite[Proposition 2.1]{PRS}.
An idea of the proof is to capture a large hyperbolic set using a "shadowing".

Next one proves that for two non-exceptional $z^1$ and $z^2$ the equality
$P_{\tree}(f,t, z^1) = P_{\tree}(f,t, z^2)$ holds. See \cite[Theorem 3.3]{P-TAMS2} and \cite[Geometric Lemma]{PRS}.
An idea is to find a curve (or a curve for each $n$) joining $z_1$ to $z_2$ in $\ov\C$ not fast accumulated by $f^n(\Crit f)$, therefore with a controlable distortion for all branches of $f^{-n}$ on its adequate neighbourhoods.

So one concludes the equality between the tree pressures for every non-exceptional $z$ and justifies the definition of tree pressure
\begin{equation}\label{def:tree-pres}
P_{\tree}(f,t):=P_{\tree}(f,t,z)
\end{equation}
for every non-exceptional $z$.

\end{proof}


\begin{proof}[Proof of Theorem \ref{equality-sup-pressure}, the item 3. ]
The inequality 

\noindent $P_{\tree}^{\rm pullinf}(f,t,z)\le P_{\tree}(t,f, z)$ holds trivially  for every  $z\in \ov\C$ as before
since obviously $\inf\{v\in W_{k,z_k,r}: |f'(y)|^{-1}\}
\le |f'(z_k)|^{-1}$ for all $k$.

\smallskip

So let us prove the opposite inequality for every non-exceptional $z\in J(f)$, namely
$$
P_{\tree}^{\rm pullinf}(f,t,z)\ge P_{\tree}(f,t, z).
$$

\smallskip

We apply for any pullback $\Comp_x f^{-k}(B(f^k(x),\tau))$ the notation $W_{k,x,\tau}$.
Write
 $r_n:=\sup_{z\in J(f), v\in f^{n}(z)}\{\diam W_{n, v,r}\}$ and $r_{\max}:=\max \{r_n: n=1,2,...\}$ for all $n=1,2,... $.
Notice that

(*)  \ for $r\to 0$ we have $r_{\max}\to 0$.

\smallskip

\noindent Indeed, by the backward uniform asymptotic stability, for all $r\le r_0$ as in Definition \ref{back-stab},
$$
\forall\epsilon>0\; \exists n(\epsilon)\; \forall n\ge n(\epsilon)\; \forall W_{n,v,r}\, ,\; \diam W_{n,v,r} \le \epsilon.
$$
Notice also, that for every $\delta>0$ there exists $\delta'>0$ such that for every $z\in \ov\C$ and every component  $\diam (\Comp f^{-1}(B(z,\delta')))\le \delta$. By iterating a number ot times smaller than $n(\epsilon)$ we
conclude that there exists $0<r'\le r$ such that for every
$n<n(\epsilon)$,\; $\diam W_{n,v,r'} \le \epsilon$. Thus,  $(r')_{\max}\le \epsilon$.
This proves (*).

\medskip

Having chosen $z\in J(f)$
consider an arbitrary backward trajectory $(z_n)$.
To simplify notation assume that $z$ is non-periodic, hence
$z_n$ determines $n$.

\medskip

Consider since now on $r$  such that $2r_{\rm max}<r_0$.
Define inductively a sequence of integers $k_j$.
Let $k_1$ be the least $k\ge 1$ such that
$$
\widetilde W^{k,1}:=W_{k,z_k,2r}$$
contains a critical point. Pay attention to the coefficient 2 at $r$; it will guarantee bounded distortion
on discs of radius $r$ of the branches $f^{-k}, k=1,...,k_1 -1$ mapping $z$ to $z_k$.

Having defined $k_j$ 
we consider the least $k=k_{j+1}>k_j$
such that
$$
\widetilde W^{k_{j+1},j+1}:=W_{k_{j+1}-k_j,z_{k_{j+1}},2\diam W_{k_j,z_{k_j},r}}
$$ contains a critical point.

\smallskip

Let $C$ be an upper bound of the distortion of $f^{k_{j+1}-k_j-1}$ on
$f( W_{k_{j+1},z_{k_{j+1}},r})$; by Koebe distortion lemma it is universal for all $r\le r_0$, see \cite[Lemma 6.2.3]{PUbook}.
Notice also that
$$
\frac{\diam  W_{k_{j+1},z_{k_{j+1}},r }}{\diam f (W_{k_{j+1},z_{k_{j+1}},r})} \le C  |f'(y)|^{-1}
$$
for all $y\in W_{k_{j+1},z_{k_{j+1}},r}$ for each $j$, and a constant $C$ depending on the maximal degree of criticality at critical points.

\smallskip

Finally notice that since  $\diam  W_{k_j,z_{k_{j}},r}\to 0$,  then by (*)

$$
\diam \widetilde W^{k_{j+1}} \to 0
$$ as $j\to\infty$, uniformly with respect to the backward trajectories $(x_n)$ for $x\in J(f)$.
So $k_{j+1}-k_j \to \infty$ uniformly if the same critical point above appears, since otherwise the trajectory
$(x_n)$ would not be in $J(f)$, see
e.g. \cite[Lemma 1]{P-PAMS}.
Taking in account that  $f$ has  only a finite number of critical points, we conclude that
$\#\{j: k_j\le n\} /n \to 0$.

\

Thus, for every $v\in W_{n,z_n,r}$
\begin{equation}\label{eq:diam-der}
\Pi_n(1,v)= |(f^n)'(v)|^{-1}     \ge
(\exp -n\epsilon)
\frac{\diam W_{n,z_n,r}}{r}.
\end{equation}
for all $n\ge n(\epsilon)$
uniformly with respect to the backward trajectory $(z_n)$, for all $\epsilon>0$.

\

Finally we shall use the assumption that $z$ is non-exceptional, set $v=z_n$.
and prove the inequality roughly opposite to \eqref{eq:diam-der}:
\begin{align}\label{diam-derivative}
\begin{split}  \frac{\diam W_{n,z_n,r}}{r}   & \ge
C\exp -n\epsilon \frac{\diam \Comp_{z_n} f^{-n} (B(z, C\exp -n\epsilon))}
{C\exp (-n\epsilon)} \\ &
\ge
\Const (\exp -n\epsilon) |(f^n)'(z_n)|^{-1} .
\end{split}
\end{align}

The latter inequality follows from the the bounded distortion of the conformal mapping
$f^{-n}:B(z,C^{-1}\exp(-n\epsilon))\to \Comp_{z_n} f^{-n} (B(z, C^{-1}\exp -n\epsilon))$  for a constant $C>0$  and every $n$ since $z$ is non-exceptional.

\smallskip

In other words, for all $r>0$ small enough,
\begin{equation}\label{individual}
\lim_{n\to \infty}  \frac1n \log \frac{\Pi^{\rm pullinf}_n(1,z_n)}{|(f^n)'(z_n)|^{-1}}=1,
\end{equation}
the convergence uniform over all backward trajectories of $z$.
\smallskip

Taking power $t$ and summing both inequalities \eqref{eq:diam-der} (taking $\inf_v$ and \eqref{diam-derivative} over $z_n$ for $n\to\infty$ and $\epsilon\to 0$ yields the inequality $\ge$ in Theorem \ref{equality-sup-pressure}, hence the equality.

\end{proof}

Notice that in $P^{\rm pullinf}_{\tree}$ some components $W_{n,z_n,r}$ can contain many elements of $f^{-n}(z)$ thus being counted many times, but the number of these times is upper bounded by $\exp \tau n$ for $\tau$ arbitrarily small and $n$ large,
again due to scarcity of "critical" times: $k_{j+1}-k_j \to \infty$ for each critical point.
This justifies
\begin{definition}\label{def:PsupW}[pullback infimumn W-tree pressure]
\begin{equation}\label{PsupW}
P^{\rm pull W}_{\tree}(f,t,z):=\lim_{r\to 0}
\limsup_{n\to\infty}\frac{1}{n}\log\sum_{W\in {\mathcal W}_n}
\prod_{k=1}^{n} \inf\{y\in f^{n-k}(W): |f'(y)|^{-t}\}.
\end{equation}
where $\mathcal{W}_n$ is the family of all pullbacks of $B(z,r)$ for $f^n$,
The limit in \eqref{PsupW} for $r\to 0$ exists due to obvious monotonicity, compare Definition \ref{def:pullinf}.
\end{definition}
Thus we obtain
\begin{corollary}
$P^{\rm pull W}_{\tree}(f,t,z)=P_{\tree}(f,t)$ for every non-exceptional $z$.
\end{corollary}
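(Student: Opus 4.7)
The plan is to reduce the corollary to item 3 of Theorem \ref{equality-sup-pressure} by comparing, term by term, the sum over pullback components $W\in\mathcal{W}_n$ used in \eqref{PsupW} with the sum over preimages $v\in f^{-n}(z)$ used in \eqref{Pi-inf}. First I would observe that, for every $v\in W\cap f^{-n}(z)$, the set $f^{n-k}(W)$ coincides with $\Comp_{f^{n-k}(v)} f^{-k}(B(z,r))$, so the product
$$\prod_{k=1}^n \inf\{|f'(y)|^{-t}: y\in f^{n-k}(W)\}$$
is identically $\Pi^{\rm pullinf}_n(t,v)$, independent of which $v\in W\cap f^{-n}(z)$ is chosen. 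Setting $m(W):=\#(W\cap f^{-n}(z))$, the two summations are therefore tied by the identity
$$\sum_{v\in f^{-n}(z)} \Pi^{\rm pullinf}_n(t,v) \;=\; \sum_{W\in\mathcal{W}_n} m(W)\prod_{k=1}^n \inf\{|f'(y)|^{-t}: y\in f^{n-k}(W)\}.$$

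Since every component $W\in\mathcal{W}_n$ is by definition the pullback through some preimage of $z$, we have $m(W)\ge 1$, so the easy direction $P^{\rm pull W}_{\tree}(f,t,z)\le P^{\rm pullinf}_{\tree}(f,t,z)=P_{\tree}(f,t)$ is immediate from the identity and item 3.

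For the reverse inequality the task will be to bound $m(W)\le \exp(\tau n)$ uniformly in $W\in\mathcal{W}_n$, for every $\tau>0$ and all sufficiently large $n$. Here $m(W)$ is nothing but the degree of $f^n|_W$, which equals the product of local multiplicities $\nu(c)$ taken over critical points $c$ traversed by the chain of pullbacks producing $W$. I would recycle the \emph{critical times} bookkeeping already developed in the proof of item 3: the gaps $k_{j+1}-k_j$ go to infinity uniformly along all pullback chains, a consequence of (buas), finiteness of $\Crit(f)$, and the fact that a forward orbit of a critical point cannot return close to itself too quickly while the pullback stays in $J(f)$ (see \cite[Lemma 1]{P-PAMS}). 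This gives $\#\{j: k_j\le n\}=o(n)$ uniformly in the chain, whence $m(W)\le \nu_{\max}^{o(n)}\le e^{\tau n}$ for $n$ large.

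Plugging this bound into the identity produces the two-sided estimate
$$\sum_{W\in\mathcal{W}_n}\prod_{k=1}^n\inf\{\cdots\} \;\le\; \sum_{v\in f^{-n}(z)}\Pi^{\rm pullinf}_n(t,v) \;\le\; e^{\tau n}\sum_{W\in\mathcal{W}_n}\prod_{k=1}^n\inf\{\cdots\};$$
applying $\limsup_n \tfrac{1}{n}\log$, then letting $r\to 0$ and finally $\tau\to 0$, gives $P^{\rm pull W}_{\tree}(f,t,z) = P^{\rm pullinf}_{\tree}(f,t,z) = P_{\tree}(f,t)$. The main technical hurdle I anticipate is upgrading the scarcity-of-critical-times estimate from "pointwise along a fixed backward trajectory" to "uniform over all pullback components $W\in\mathcal{W}_n$ simultaneously", but the uniformity already obtained in the proof of item 3 appears to deliver exactly this, so no essentially new argument should be required.
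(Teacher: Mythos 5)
Your proposal is correct and follows essentially the same route as the paper: the paper's justification is precisely the observation that the two sums differ only by the multiplicities $m(W)\ge 1$, bounded above by $\exp(\tau n)$ via the scarcity of critical times $k_j$ established (uniformly over backward trajectories, hence over pullback chains) in the proof of item 3, after which the conclusion follows from $P^{\rm pullinf}_{\tree}(f,t,z)=P_{\tree}(f,t)$. Your write-up merely makes explicit the identity $\sum_v \Pi^{\rm pullinf}_n(t,v)=\sum_W m(W)\prod_k\inf\{\cdots\}$ that the paper leaves implicit.
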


\begin{remark} The inequality \eqref{eq:diam-der} was proved in \cite{P-Perron} and named a "telescope lemma".
However there the exponential convergence of $(f^n)'(v)|^{-1}$ to 0 was assumed and nothing assumed  on
$\diam W_{n,z_n,r}$. Here the uniform convergence $\diam W_{n,z_n,r}  \to 0$ is a priori assumed,
but nothing about the derivatives.
\end{remark}

\begin{remark}\label{unlike}  In $P^0_{\tree}(f,t)$ a fraction
$\frac {
\prod_{k=0}^n \inf\{|f'(y)|^{-1}: y\in B(f^{n-k}(v),\delta)\} }{
|(f^n)'(z_n)|^{-t}}$
can be very close to 0, unlike in \eqref{tele}.
So an analogon of \eqref{tele} need not hold. See Remark \ref{omit-troubles} and Section \ref{trunk-tree}.

\end{remark}

\

\section{McMullen's pressures}\label{sec:McM}


\

Define now McMullen's pressure. Assume there is a puzzle structure for $f$ (a Markov partition with singularities).
Namely there exists a covering  of a neighbourhood of $J(f)$ by a family $\cP$ of closed topological Jordan discs $P_i$, small enough that none of them contains more than one critical point, whose interiors $\Int P_i$ intersect $J(f)$ and are mutually disjoint, and such that if $f(\Int P_i)$ intersects
$\Int P_j$ then $f(\Int P_i)\supset \Int P_j$.  We assume also that all the maps $f|_{\Int P_i}$ are proper.
We allow critical points to belong to the boundaries of $P_i$.

Note that this definition has some differences from McMullen's one. Firstly, we do not use any measure in it.
Secondly, McMullen assumed an expanding property, we do not. On the other hand he assumed $f$ to be continuous (conformal)
only piecewise, that is on each $P_i$.

Following McMullen \cite{McM-HD3} define a refinement ${\sR}({\cP})$
by

\noindent $\cl \Bigl(\Int f^{-1}(\cP) \bigvee {\cP}\Bigr)$ that is the family of the closures of all components of the sets
$f^{-1}( \Int P_j) \bigvee {\Int P_i}$. 
This is also a covering of a neighbourhood of $J(f)$ (maybe smaller than the one for $\cP$) by the complete $f$-invariance of $J(f)$, and has a puzzle structure.
We consider consecutive refinements ${\sR}^N({\cP})$ and assume that the diameters of
their elements shrink to 0 uniformly.

\medskip


For $P_{N,i}, P_{N,j}\in \sR^{N}({\cP})$  denote  and number
the closures  of the  components of $\Int P_{N,i})\cap
 \Int f^{-1}(P_{N,j})$ 
 by $P_{N+1,i,j,s}$ where $s=1,...,s(i,j)=s_N(i,j)$.
 Sometimes we shall omit $s$, to simplify notation.
 Of course the number of the components $s(i,j)$ is larger than 1 if and only if
 there is an $f$-critical point
 $c\in \Int P_{N,i}\setminus \Int f^{-1}(P_{N,j})$ (remember that we assume the puzzle pieces are small enough each to contain at most one $f$-critical point in $\Int P_{N.i}$).
  In the later case $s(i,j)=\nu(c)$ (the multiplicity of $f$ at $c$).
  If the intersection is empty, we set $s(i,j)=0$
 In this notation the number of components $P_{N+1,\iota}\in \sR^N({\cP})$ is $\sum_{i,j} s(i,j)$.

 In McMullen's hyperbolic setting all $s(i,j)$ are 0 or 1 since there are no critical points on stage.
 The same holds in Example \ref{ex:2}.

\medskip

With each  $\cP$ and $N$ as above
distinguish a point $y_{N,i}$ 
 in each $\Int P_{N,i}$.
With $\sR^N(\cP)$ we associate the
matrix $\sR^N(T)$ with the entries
 \begin{equation}\label{Nentries}
 a_{ij}=
 \begin{cases}
 |f'(y_{N,i})|^{-1} & \mbox{if } \; s(i,j)>0, \\  0 & \mbox{if }\; s(i,j)=0.
 \end{cases}
  \end{equation}

\medskip

If $T$ is a rank $M$ matrix, then $\sR(T)$ is a rank $\sum_{i,j=1}^M s(i,j)$ matrix. Similarly $\sR^{N+1}(T)$
for $\sR^N(T)$ in place of $T$.
If we consider a simplified integer-valued rank $M$ matrix $\hat T$, with
  each $ij$ entry equal to $s(i,j)$, then in the directed graph interpretation the vertices  for $\sR(\hat T)$ are the edges
 of the graph of $\hat T$. In the notation above $\iota$ corresponds to an edge $(i,j,s)$. This is a derived graph concept, see \cite{Ore}, except the multiplicities $s(i,j)$ of the edges for $\hat T$, each giving rise to $s(i,j)$
 vertices for the graph of $\sR(\hat T)$.


\medskip

Let $\lambda(\sR^N(T))$ denote the spectral radius of $\sR^N(T)$.
For each $t>0$ we use the notation replacing above $T$ by $T^t$, the matrix with each entry
being the adequate entry for $T$ raised to the power $t$. Similarly we define ${\sR}^N(T)^t$. In particular we
 denote its spectral radius by $\lambda(\sR^N (T)^t)$.
Due to the topological exactness of $f$ on $J(f)$
 \begin{equation}\label{entry}
 \lambda ({\sR}^N(T)^t):=\lim_{n\to\infty} {\root n \of {||(({\sR}^N(T))^t)^n||}}=
 \lim_{n\to\infty} {\root n \of {(({\sR}^N (T)^t)^n})_{ij}}
 \end{equation}
 independently of an arbitrary position $ij$ for $1\le i,j \le {\rm rank}(({\sR}^N(T))^t)$.
Indeed,  in our situation the topological exactness says that all the matrices ${\sR}^N(T)$ are \emph{primitive}, that is
 $(({\sR}^N(T))^n)_{ij}>0$ for $n$ large enough for all $i,j$. Useful is the term $(i_0,...,i_n)$ being
 \emph{admissible}, which means that $\sR^N(T)_{i_k i_{k-1}}>0$ for all $k=1,...,n$. Using this we can say
 that ${\sR}^N(T)$ is primitive if for all $n$ large enough and all $i,j$ there exists an admissible sequence
$(i_0,...,i_n)$ such that $i_0=j$ and $i_n=i$.

\smallskip

At last define \emph{McMullen's pressure}, see \cite{McM-HD3} in the hyperbolic case.

\begin{equation}\label{def:McM}
P_{\rm McM}(f,t):=\limsup_{N\to\infty} \log\lambda ({\sR}^N(T)^t).
\end{equation}

\medskip

{\bf Warning.} Unfortunately in presence of critical points in $J(f)$ this notion has deficiencies if the distinguished points
are critical or close to critical ones, making $P_{\rm McM}(f,t)$ too big, bigger than $P(f,t)$. A remedy is to consider

\begin{definition}[restricted McMullen's pressure]\label{def:trunkMcM}
Define the restricted McMullen's pressure as
\begin{equation}\label{eq:restrictedMcM}
\widehat{P_{\rm McM}}(f,t):=\lim_{N\to\infty} \log\lambda (\widehat{{\sR}^N(T)}^t),
\end{equation}
where in each $\widehat{{\sR}^N(T)}$ we consider all the entries at the positions $ij$
such that
\begin{equation}\label{trunk-matrix}
\frac{\dist (P_{N,i}, \Crit (f)}{\diam P_{N,i}} \ge A(N)
\end{equation}
the same as in ${\sR}^N(T)$, and
all other to be 0. Here $A(N)$ is an arbitrary sequence of numbers tending to $\infty$ as $N\to\infty$
such that $A(N) {\diam }(\sR^N(\cP)) \to 0$,
where diam here denotes
supremum of diameters of the sets of a partition.

The limit in \eqref{eq:restrictedMcM} exists since the ranks of the matrices are growing, acquiring growing number of entries (puzzle pieces), and the puzzle pieces already considered become split with growing $N$, the distinguished points may move but the movements distances decrease to 0. A detailed proof relies on
\eqref{simplified}   and \eqref{details}. In fact we do not need it a priori to prove any limit
(for a convergent subsequence) is equal to $P(f,t)$ as in Proof of Theorem \ref{McM-tree}.
Still a deficiency of this notion is that with $y_{N,i}$ arbitrary, even far from $\Crit (f)$,  there is no reason that the sequence is monotone increasing, nor that its elements do not exceed $P(f,t)$ slightly.

\end{definition}

\medskip

So, consider also suggested in \cite{DGT}, see here Example \ref{ex:2},

\begin{definition}[fuzzy McMullen's pressures]\label{fuzzyMcM}

To define $P^0_{\rm McM}(f,t)$, fuzzy McMullen's pressure (another name: infimum McMullen's pressure),
just replace $|f'(y_{N,i})|^{-t}$ by  $\inf_{y\in P_{N,i}} |f'(y)|^{-t}$
for each $N$ and $P_{N,i}$
in the definition of McMullen's pressure in $a_{ij}$, see \eqref{Nentries}. Namely consider adequate matrices
$(\sR^N(T)^t)^{\inf}$ and their eigenvalues $\lambda_{N,t}^{\inf}$ and

\begin{equation}\label{eq:fuzzyMcM}
P^0_{\rm McM}(f,t):=\lim_{N\to\infty} \log\lambda_{N,t}^{\inf}.
\end{equation}
The limit exists since the sequence is monotone increasing, because when the puzzle pieces split for $N$ growing,
infima are taken on smaller sets.
\end{definition}

\begin{definition}[fuzzy restricted McMullen's pressure]  To define it, denoted  $\widehat{P^0_{\rm McM}}(f,t)$, keep unchanged the entries $a_{ij}$ in the matrices accompanying  $P^0_{\rm McM}(f,t)$ for $i$ satisfying  \eqref{trunk-matrix},
putting 0 elsewhere. The monotone increasing holds as before.
\end{definition}

\smallskip

Now we can complete the proof of Theorem \ref{equality-sup-pressure}.

\begin{theorem}\label{McM-tree}
In the setting above
for each $t>0$ 
$$
P(f, t) = \widehat{P_{\rm McM}(f,t)}=\widehat{P^0_{\rm McM}(f,t)}= P^0_{\rm McM}(f,t) \le P_{\rm McM}(f,t).
$$

\end{theorem}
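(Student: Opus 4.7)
The plan is to reduce the whole chain to $P(f,t)$ by establishing just three inequalities: the lower bound $P(f,t)\le\widehat{P^0_{\rm McM}}(f,t)$ and the two upper bounds $\widehat{P_{\rm McM}}(f,t)\le P(f,t)$ and $P^0_{\rm McM}(f,t)\le P(f,t)$. The remaining comparisons $\widehat{P^0_{\rm McM}}(f,t)\le P^0_{\rm McM}(f,t)\le P_{\rm McM}(f,t)$ and $\widehat{P^0_{\rm McM}}(f,t)\le\widehat{P_{\rm McM}}(f,t)\le P_{\rm McM}(f,t)$ are immediate from entrywise comparison of non-negative matrices: $\inf_{P_{N,i}}|f'|^{-t}\le|f'(y_{N,i})|^{-t}$, and replacing further entries by $0$ can only shrink the spectral radius.

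For the upper bounds I would first invoke primitivity and Gelfand's formula to identify $\lambda(\sR^N(T)^t)^n$ with the sum $\sum\prod_{k=1}^n|f'(y_{N,i_k})|^{-t}$ over length-$n$ admissible sequences $(i_0,\dots,i_n)$; these sequences are in bijection (with $s(i,j)$ absorbing critical branching) with components $P_{N+n,i_0\dots i_n}$ of $f^{-n}(\Int P_{N,i_0})$. Fix a non-exceptional $z\in\Int P_{N,i_0}$. When \eqref{trunk-matrix} holds at every index along the path, the assumption $A(N)\to\infty$ together with Koebe distortion on the nested pullbacks forces $\prod_{k=1}^n|f'(y_{N,i_k})|^{-t}$ to agree with $|(f^n)'(v)|^{-t}$, for the unique $v\in f^{-n}(z)$ landing in $P_{N+n,i_0\dots i_n}$, up to a factor $e^{o(n)}$ uniform in $N$; summing yields $\widehat{P_{\rm McM}}(f,t)\le P_{\tree}(f,t,z)=P(f,t)$ via item~1 of Theorem~\ref{equality-sup-pressure}. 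The inequality $\widehat{P^0_{\rm McM}}(f,t)\le P(f,t)$ goes through with the same argument, using $\inf_{P_{N,i}}|f'|^{-t}\le|f'(y_{N,i})|^{-t}$.

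For the lower bound, given $\epsilon>0$ I would choose $X\in\cH(f,J(f))$ with $P(f|_X,-t\log|f'|)\ge P(f,t)-\epsilon$. Since $X$ is compact and disjoint from $\Crit(f)$, for $N$ large every piece of $\sR^N(\cP)$ meeting $X$ automatically satisfies \eqref{trunk-matrix}, and uniform continuity of $\log|f'|$ on a neighbourhood of $X$ makes the infimum entries $\inf_{P_{N,i}}|f'|^{-t}$ agree with $|f'(x)|^{-t}$ for any $x\in P_{N,i}\cap X$ up to a factor $e^{\pm\epsilon}$. Admissible sequences shadowing $(n,\e)$-separated orbits in $X$ therefore dominate the separated-set sums defining $P_{\sep}(f|_X,-t\log|f'|)$, giving $\widehat{P^0_{\rm McM}}(f,t)\ge P(f|_X,-t\log|f'|)-O(\epsilon)$; taking the supremum over $X$ and letting $\epsilon\to 0$ closes the loop.

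The main obstacle is the unrestricted upper bound $P^0_{\rm McM}(f,t)\le P(f,t)$, where pieces meeting or very close to $\Crit(f)$ stay in the matrix and Koebe distortion fails locally. Here the plan is to reuse the scarcity-of-critical-times estimate from the proof of item~3: along any length-$n$ admissible path the number of indices $k$ for which $P_{N,i_k}$ violates \eqref{trunk-matrix} is $o(n)$ uniformly, because successive near-critical visits are separated by times $k_{j+1}-k_j\to\infty$ forced by backward uniform asymptotic stability and the finiteness of $\Crit(f)$. The factor lost at each critical step is polynomial in $\diam\sR^N(\cP)$, so contributes only $e^{o(n)}$ to the matrix product; the unrestricted fuzzy spectral radius then converges to the same $P(f,t)$ as the restricted one.
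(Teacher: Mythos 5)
Your overall architecture (sandwiching everything between $P_{\hyp}(f,t)$ and $P_{\tree}(f,t,z)$, using the entrywise matrix comparisons for the trivial inequalities, and using condition \eqref{trunk-matrix} plus the divergence $A(N)\to\infty$ to control the oscillation of $|f'|$ on restricted pieces) matches the paper's proof, and your three-inequality reduction does logically yield the full chain. The lower bound via a hyperbolic $X$ with $N$ so large that all pieces meeting $X$ satisfy \eqref{trunk-matrix} is also the paper's argument.

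The genuine gap is in your treatment of $P^0_{\rm McM}(f,t)\le P(f,t)$, which you call the main obstacle and attack with a scarcity-of-critical-times argument. First, that argument does not transfer from the proof of item~3: there the critical visits are visits of \emph{shrinking} pullbacks $W_{k,z_k,r}$ to $\Crit(f)$, and the gaps $k_{j+1}-k_j\to\infty$ come from the diameters tending to $0$. At a fixed refinement level $N$ the pieces $P_{N,i}$ have a fixed positive diameter, and a backward orbit may enter the corresponding neighbourhood of $\Crit(f)$ with positive frequency $\epsilon_N$ (only $\epsilon_N\to0$ as $N\to\infty$, not $o(n)$ for fixed $N$); combined with a per-visit loss that is polynomial in $\diam\sR^N(\cP)$, the product $\epsilon_N\log(1/\diam\sR^N(\cP))$ need not tend to $0$, so your error term is not $e^{o(n)}$ in the relevant double limit. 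Second, and more importantly, no such argument is needed: for the \emph{fuzzy} pressure the comparison is one-sided and termwise. Every admissible sequence $(i_0,\dots,i_n)$ is realized by a genuine backward orbit $z_k=f^{n-k}(z_n)\in \Int P_{N,i_k}$ of a non-exceptional $z_0=z^{N,i_0}$, and then $\inf_{y\in P_{N,i_k}}|f'(y)|^{-t}\le |f'(z_k)|^{-t}$ simply because the infimum is taken over a set containing $z_k$; multiplying over $k$ dominates the matrix-power sum by the tree-pressure sum (up to the harmless constant $\#\sR^N(\cP)$ from the choice of $i_0$), with no distortion estimate at all. This is the paper's route: it proves $\widehat{P_{\rm McM}}=\widehat{P^0_{\rm McM}}$ by the pointwise distortion bound \eqref{simplified}--\eqref{details} on restricted pieces, then $\widehat{P^0_{\rm McM}}\le P^0_{\rm McM}\le P_{\tree}(f,t,z)\le P_{\hyp}(f,t)$, so the unrestricted fuzzy pressure is handled by the trivial inequality rather than by any control of near-critical visits. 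You should replace your final paragraph by this observation; your direct Koebe-based comparison of $\prod_k|f'(y_{N,i_k})|^{-t}$ with $|(f^n)'(v)|^{-t}$ for the restricted non-fuzzy pressure is likewise more naturally done piecewise via \eqref{trunk-matrix} than via distortion of pullbacks.
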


\begin{proof} We prove the following. For any non-exceptional $z\in J(F)$
\begin{align}\label{ineq}
\begin{split}
P_{\hyp}(f,t) \le  &
 \widehat{P_{\rm McM}}(f,t) =
\widehat{P_{\rm McM}^0}(f,t) \le \\ &
 P^0_{\rm McM} (f,t) \le
P_{\tree}(f,t,z) \le
P_{\hyp}(f,t).
\end{split}
\end{align}

Consider an arbitrary set $X\in\cH(f,J(f))$. Let $N$ be large so that
every $P_{N,i}\in \sR^N(\cP)$ has diameter less than $\delta\ll\dist(X,\Crit(f)$, as in \eqref{trunk-matrix},
so that $\frac{|f'(x)|}{|f'(v)|}$ for $v\in B(x,\delta)$ for each $x\in X$ is close to 1 (compare Proof of Theorem
\ref{hatP0}).
Consider now only the puzzle pieces intersecting $X$.
Thus, passing to limits we obtain $P_{\hyp}(f,t) \le \widehat{P_{\rm McM}}(f,t)$.

Similarly,  considering puzzle pieces
$P_{N,i}$ satisfying \eqref{trunk-matrix} and distinguished $y_{N,i}$ in them, we  prove  the equality
$\widehat{P_{\rm McM}}(f,t)= \widehat{P^0_{\rm McM}}(f,t)$. Clearly only the $\le$ part is non-trivial.
Here is the proof:

For any $v\in P=P_{N,i}$ we have, close to a critical point $c$ of multiplicity $\nu$,
assuming for simplification that $f(x)=(x-c)^\nu$, writing $y$ for $y_{N,i}$
\begin{equation}\label{simplified}
\Bigl|\Bigl(\frac{|f'(v)|}{| f'(y)|}\Bigr)^{1/(\nu-1)} - 1\Bigr| \le  \Bigl|\Bigl(\frac{\dist (v,c)}{\dist (y,c)}\Bigr) -1\Bigr| \le
\end{equation}
$$
\frac {|\dist(v,c) - \dist(y,c)|}  {\dist(y,c)} \le
\frac {\diam(P)}{\dist (P,c)}\le A(N)^{-1},
$$
see \eqref{trunk-matrix}. So $\frac{|f'(v)|}{| f'(y)|}\to 1$
 as $N\to\infty$.
Without the simplification we can write $f(z)=a(z-c)^\nu(h(z))$ for an analytic map $h(z)=1+a_1 (z-c) + ...$
in a neighbourhood of $c$, so
\begin{align}\label{details}
\begin{split}
f'(z) & =  a \nu (z-c)^{\nu-1} h(z) + a (z-c)^\nu h'(z) \\ & =
a (h(z)\nu + (z-c)h'(z))(z-c)^{\nu-1} \\ &  =
a \bigl(1+a_1 (z-c) +...)\nu + (z-c)(a_1+ ...)\bigr)(z-c)^{\nu-1}   \\ & =
a\nu(z-c)^{\nu-1} (1 + O(z-c)).
\end{split}
\end{align}

So for $\dist(P,c)$ small enough, for all $N$, $|f'(v)/f'(y)|\le 2 A(N)^{-1}$. Further from $c$,
i.e. in a domain bounded away from $\Crit(f)$,
$\log |f'|$ is Lipschitz continuous, so for large
$N$,\   $\dist(v,y)$ implies $|\log |f'(v)|-\log |f'(y)||$ small, so $|f'(v)/f'(y)|$ is close to 1,
namely it tends to 1 as $N\to\infty$.

\medskip



The inequality $\widehat{P^0_{\rm McM}}(f,t) \le P^0_{\rm McM}(f,t)$ is
obvious because the matrices associated to the former one have just zeros replacing non-zero terms in the latter ones.


\

To prove the inequality $P^0_{\rm McM}(f,t)\le P_{\tree}{\tree}(f,t)$, first for each $P_{N,i}\in \sR^N(\cP)$ select an arbitrary   non-exceptional point $z^{N,i}$ in it.
Consider an arbitrary sequence of integers $i_0,...,i_n$, admissible, 
that is such that 
$\Int P_{N,i_k}\cap f^{-1}(P_{N,i_{k-1}})\not=\emptyset$ for all $k=1,...,n$.
It contributes in the matrix $({\sR}^N(T)^t)^n$ as corresponding to the path in the related directed graph with edges joining consecutively the vertices $i_n,...,i_0$.

Consider
$z_n\in \bigcap_{k=0}^n f^{-(n-k)}(\Int P_{N,i_k})$  (maybe disconnected !)   
and $z_k:=f^{n-k}(v_n)$ for all $k=n-1,...,0$, with common
$z=z_0=                        
z^{N,i_0}$.                   
In fact the number of possible points $z_n$ is equal to
$\prod_{k=0}^{n-1} \deg(f|_{P_{N,k+1}\cap f^{-1}(P_{N,k})})$.

Now $P^0_{\rm McM} (f,t) \le P_{\tree}(f,t,z)$ follows from the obvious
$\inf_{y\in P_{N,i_k}} |f'(y)|^{-1}\le |f'(z_k)|^{-1}$ for each $k=1,...,n$.
Indeed the only issue is that we took care only for
the sequences $(i_0,...,i_k)$ with common $i_0$.
Considering all the sequences the constant factor $\#(\sR^N(\cP))$ appears.
However it disappears when $n\to \infty$ in the definition of the spectral radius.




\smallskip

The last inequality in \eqref{ineq}  is known if $z\in J(f)$ is hyperbolic and non-exceptional, with a proof via capturing hyperbolic subsets via shadowing; it was mentioned in Proof of Theorem \ref{equality-sup-pressure}, referring to \cite{PRS}.

Finally, notice that  the inequality, say,  $P_{\rm McM}^0(f,t)\le P_{\rm McM}(f,t)$ is obvious.

\end{proof}

\begin{remark}[Fuzzy multiple McMullen's pressure]
Notice that replacing the sequence $y_{N,i_k}$ for admissible $(i_0,...,i_n)$ we do not exploit all appropriate
$z_n$. So in the definition of the matrix $\sR^N(T)$ we could consider each entry $a_{ij}$ multiplied by
$ \deg(f|_{P_{N,i}}))=s_N(i,j)$. For a related notion of pressure, to be called
\emph{fuzzy multiple MsMullen's pressure}, denoted $P^0_{\rm multMcM}(f,t)$ we also get the  upper bound
by $P_{\tree}(f,t)$.
\end{remark}

\begin{remark}\label{omit-troubles}



We cannot prove
for each sequence $i_1,...,i_n$ admissible for the matrix $\sR^N(\cP)$ 
neither the
inequality
\begin{equation}\label{eq:1}
\liminf_{N\to\infty}\liminf_{n\to\infty} \frac1n \log\Bigl(
\frac {\prod_{k=1}^n  |f'(y_{N,i_k})|}
{|(f^n)'(z_n)|} \Bigr)
\le 0
\end{equation}
nor the opposite one with liminf replaced by limsup,
unlike in e.g.
\eqref{individual}. A problem is, that the expressions
 $|(f^n)'(z_n)| $                                                           
must be replaced by
$\prod_{k=1}^n   |f'(y_{N,i_k })|$, where domains of $f'$ to which $y_{N,i_k }$ respectively belong, are
larger than the components of $f^{-k}(B(f^n(z_n),r)$ for large $k$, so the latter products can be too large or too small.
Namely the choices of $y_{N,i_k}$ for $P_{N,i_k}$ close to $\Crit(f)$, are much further from $\Crit(f)$ than $z_k$, or much closer.

A way out of this trouble would be to replace one telescope in
the proof of \eqref{individual} by a sequence of telescopes. But for this, to prove e.g. $\le 0$ in
\eqref{eq:1} we need to know that each $z_{n_j}$ starting a new telescope is non-exceptional (with the same constants) to obtain  \eqref{diam-derivative}, which can be impossible.

So, a way we have chosen to avoid $\prod_{k=1}^n   |f'(y_{N,i_k })|$ too large, has been just to get rid of the trouble-making backward trajectories, by considering the restricted McMullen's pressure.

A way to avoid $\prod_{k=1}^n   |f'(y_{N,i_k })|$ too small, that is $\prod_{k=1}^n   |f'(y_{N,i_k })|^{-1}$
too large, is to consider  fuzzy McMullen's pressure
replacing $|f'(y_{N,i})|^{-1}$ by $\inf \{|f'(y)|^{-1}: y\in P_{N,i}\}$, see Definition \ref{fuzzyMcM}. Without this it can just happen that
$P_{\rm McM}(f,t)>P(f,t)$, see the Warning preceding Definition \ref{def:trunkMcM} (or the restricted one as in that Definition).
For another remedy, replacing the infimum or one point $y_{N,i}$ by pairs of distinguished points, see Section \ref{final}.
\end{remark}



\begin{remark}\label{XN}
Notice that each set
\begin{equation}
X_N=\bigcap_{n=1}^\infty \bigcup_{i_0,...i_n}\bigcap_{k=0}^{n-1} f^{-(n-k)}(P_{N,i_k})
\end{equation}
the summation over all $(i_0,...,i_k)$ admissible for the restricted matrix, i.e. $\widehat{\sR^N(T)}$, see
\eqref{trunk-matrix}.

\smallskip

Notice that each $X_N$ is hyperbolic for $f|_{X_N}$. Indeed, inverses of

\noindent $(f|_{\Int P_{N,i_0,...,i_n}})^n$
where $P_{N,i_0,...,i_n}:={\bigcap_{k=0}}^{n-1} f^{-(n-k)}(P_{N,i_k})$,
for each $i_n$ is a
Montel normal family of holomorphic maps on $P_{N,i_n}$. This is so, because the ranges omit $f^k(\Crit(f)), k=0,...,n$, i.e. eventually more then 2 points in $\ov\C$. So their limits must be points since otherwise a limit domain $U$ would not intersect $J(f)$ as all $f^n(U')$ for some open $U'\subset U$ intersecting $J(f)$ and $n$ large enough, are bounded in $P_{N,i_n}$, contradicting the definition of Julia set.
On the other hand $U$ must intersect $J(f)$ by its backward invariance and compactness. This implies uniform
convergence of $|(f^n)'|^{-n}$ on $X_N$ to 0, hence hyperbolicity.

\smallskip

Notice however that $X_N$ need not be repelling, like in Definition \ref{hyperbolic pressure},
equivalently: the maps $f|_{X_N}$ need not be open. See \cite[Example 4.5.5]{PUbook} where  a
question of a small extension of $X_N$ to an invariant set on which $f$ is open was discussed.

\smallskip

Notice finally that the sequence of the sets $X_N$ is monotone increasing with respect to inclusion
and that the pressures satisfy
$$
P(f|_{X_N}, -t\log |f'|)= \log \lambda (\widehat{\sR^N(T)}^t).
$$
\end{remark}

\section{Restricted fuzzy tree pressure}\label{trunk-tree}

\medskip

One more definition of pressure might be useful for computations, close to the restricted McMullen's pressure and to the fuzzy tree pressure as in \eqref{P0} and \eqref{Pdelta}, for the potentials $-t\log|f'|$, making sense for
all rational maps. Namely define

\begin{equation}\label{convergence}
\widehat{P^0_{\tree}} (f,t,z):=\lim_{\Delta\to 0}\widehat{P^{\Delta}_{\tree}} (f,t,z),
\end{equation}
where
\begin{equation}
 \widehat{P^{\Delta}_{\tree}} (f,t,z):=\limsup_{n\to\infty}\frac1n \log
 \sum_v \widehat{\Pi^\delta}(v),
 \end{equation} where the sum is over  all
 $v\in f^{-n}(z)$ such that $\dist (f^{n-k}(v),\Crit (f))>\Delta$ for all $1\le k\le n $.

where
\begin{equation}\label{hatv}
\widehat{\Pi^\delta}(v):= \prod_{k=1}^n |f'(\widehat{ v_k})|^{-t}
\end{equation}
where $\widehat{v_k}$ is a point in $\cl B(f^{n-k}(v),\delta)$ where $|f'|^{-1}$ takes infimum, and
\begin{equation}
\delta=o(\Delta).
\end{equation}

Existence of the limit in \eqref{convergence}  follows from the monotone increasing, which is obvious since the infima are taken on shrinking sets. Notice that if we do not mind about the monotonicity, we can choose
$\widehat{v_k}$ arbitrarily (randomly) in the ball.

\smallskip

\begin{theorem}\label{hatP0}
 $\widehat{P^0_{\tree}} (f,t,z)$ does not depend on non-exceptional $z$. So (omitting writing $z$) we have
$$\widehat{P^0_{\tree}} (f,t)= P(f,t)$$
\end{theorem}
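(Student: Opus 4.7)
The plan is to prove both inequalities $\widehat{P^0_{\tree}}(f,t,z)\le P(f,t)$ and $\widehat{P^0_{\tree}}(f,t,z)\ge P(f,t)$ for every non-exceptional $z\in\ov\C$; since the common value is the single number $P(f,t)$, the asserted $z$-independence then follows automatically.

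The upper bound is essentially free from item~1 of Theorem~\ref{equality-sup-pressure}. Because $\widehat{v_k}$ realizes the infimum of $|f'|^{-1}$ on $\cl B(f^{n-k}(v),\delta)$, we have $|f'(\widehat{v_k})|^{-t}\le |f'(f^{n-k}(v))|^{-t}$ for every $k$, hence $\widehat{\Pi^\delta}(v)\le \Pi_n(t,v)$. Since the sum defining $\widehat{P^\Delta_{\tree}}$ runs only over a subset of $f^{-n}(z)$, this gives $\widehat{P^\Delta_{\tree}}(f,t,z)\le P_{\tree}(f,t,z)=P(f,t)$, and the inequality is preserved in the limit $\Delta\to 0$.

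For the lower bound I would fix $X\in\cH(f,J(f))$ and first establish $\widehat{P^0_{\tree}}(f,t,z_X)\ge P(f|_X,-t\log|f'|)$ for a non-exceptional $z_X\in X$, which exists since $\HD(E)=0<\HD(X)$. Hyperbolicity gives $\dist(X,\Crit(f))\ge 2\Delta_0>0$, and on the $\Delta_0$-neighbourhood of $X$ the function $\log|f'|$ is Lipschitz with some constant $L$. For $\Delta<\Delta_0$ and $\delta=o(\Delta)$, every backward trajectory of $z_X$ remaining in $X$ automatically satisfies the $\Delta$-restriction, and
\[
\widehat{\Pi^\delta}(v)\ge e^{-tLn\delta}\,\Pi_n(t,v).
\]
Summing only over such in-$X$ trajectories, which by Remark~\ref{easy} applied to the expanding map $f|_X$ yield tree pressure equal to $P(f|_X,-t\log|f'|)$, and then taking $\limsup_{n\to\infty}\frac1n\log$, produces $\widehat{P^\Delta_{\tree}}(f,t,z_X)\ge P(f|_X,-t\log|f'|)-tL\delta$. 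Sending $\Delta\to 0$ (so $\delta\to 0$) and taking supremum over $X$ via Proposition~\ref{hypdim} gives the desired bound at the specific point $z_X$, which however varies with $X$.

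The final step, which I expect to be the main obstacle, is to transfer this bound from $z_X$ to an arbitrary non-exceptional $z$. I would argue as in item~1 of Theorem~\ref{equality-sup-pressure} (following \cite[Thm.~3.3]{P-TAMS2} and \cite[Geometric Lemma]{PRS}): non-exceptionality of $z$ and $z_X$ provides a curve joining them and not fast accumulated by $f^n(\Crit(f))$, on a neighbourhood of which all branches of $f^{-n}$ have uniformly bounded distortion. Prepending such a branch to each in-$X$ backward trajectory of $z_X$ produces a backward trajectory of $z$ of length $n+O(1)$; the prepended portion lies in a compact set bounded away from $\Crit(f)$, so the $\Delta$-restriction is preserved for all sufficiently small $\Delta$, and its derivative contribution to $\frac1n\log$ vanishes in the limit. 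Hence $\widehat{P^0_{\tree}}(f,t,z)\ge\widehat{P^0_{\tree}}(f,t,z_X)$, completing the proof.
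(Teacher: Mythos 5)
Your proposal is correct and follows essentially the same route as the paper: the upper bound by domination of $\widehat{\Pi^\delta}(v)$ by $\Pi_n(t,v)$ together with $P_{\tree}(f,t,z)=P(f,t)$, and the lower bound by fixing a hyperbolic $X\in\cH(f,J(f))$, using $\Delta<\dist(X,\Crit(f))$ and $\delta=o(\Delta)$ so that the potential varies by $O(\delta)$ along in-$X$ trajectories. The only difference is cosmetic: you spell out the transfer from $z_X$ to an arbitrary non-exceptional $z$ via the curve/bounded-distortion argument of \cite{P-TAMS2} and \cite{PRS}, whereas the paper disposes of this step with a brief remark about replacing $z$ by a preimage close to $X$; both are standard and equivalent here.
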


\begin{proof}
The proof is similar to the proof of Theorem \ref{McM-tree}.
The inequality
$\widehat{P^0_{\tree}} (f,t,z)\le P^0_{\tree}(f,t,z)$ is obvious, just more backward branches of $z$ in the latter pressure are considered. Also $P^0_{\tree}(f,t,z) \le P_{\tree}(f,t,z)$ is obvious, it was mentioned already in
Theorem \ref{equality-sup-pressure}, the item 1.

It is left to prove $P_{\hyp}(f,t)\le \widehat{P^0_{\tree}} (f,t,z)$.
For this, consider an arbitrary hyperbolic $X\subset J(f)$. It is enough to
consider a non-exceptional point $z\in X$ (or just
a preimage under an iterate of $f$, arbitrarily close to $X$,
of an priori given point $z$).
We prove $P_{\tree}(f|_X,t,z) \le \widehat{P^0_{\tree}} (f,t,z)$.
It is a repetition of the proof of $P_{\hyp}(f,t) \le
 \widehat{P_{\rm McM}}(f,t)$ in Theorem \ref{McM-tree}.
For $\Delta=\dist (X,\Crit)$ we use
$\delta=o(\Delta)$ to assure that for $x\in X$ and $y\in B(x,\delta)$ for $x,y$ close to a critical point $c \in \C$ with a multiplicity $\nu$ for $f$, the ratio
$$
|f'(x)|/|f'(y)|\le \Const \bigl(|x-c|/|y-c|)\bigr)^{\nu-1} \le 1+ \Const \Bigl(\frac{\delta}{\Delta}\Bigr)^{\nu-1}
$$
is close to 1. For more details see \eqref{details}.  In other words the difference of the potentials $-t\log |f'(x)| - (-t\log |f'(y)|)$ is small.

\end{proof}


\begin{remark} Introducing $\widehat{P^0_{\tree}}$ between $P_{\hyp}$ and $P^0_{\tree}$ shows directly  how to omit the problem for individual backward trajectories, see Proof of
Theorem \ref{equality-sup-pressure} the item 1 in Section \ref{sec:fuzzy-inf}, Remark \ref{unlike}, and compare  Remark \ref{omit-troubles}.
\end{remark}

\smallskip

\section{Final remarks, more geometric pressures and examples}\label{final}

\subsection{On convergence}


\smallskip

\begin{remark}
It is obvious that the sequence of functions $t\mapsto \widehat{P^{\Delta}_{\tree}} (f,t,z)$ in \eqref{convergence} converges uniformly (locally) as $\Delta\to 0$ and clear that the limit $P(f,t)$ is non-increasing, by e.g. the $P_{\hyp}(f,t))$ definition.
So calculating these functions
and their first zeros we  obtain  as the limit the first zero of $P(f,t)$, which is $\HD_{\hyp}(J(f))$, see
Proposition \ref{hypdim}.
Unfortunately we do not know the speed of the convergence for general $f$. For some classes of maps $f$ the situation is better, i.e. topological Collet-Eckmann maps, Remark \ref{CE}.
\end{remark}

\begin{remark}
It might be worthy to use instead, the functions
$$t\mapsto \log \lambda(\widehat{\sR^N(T)}^t),
$$ as $N\to\infty$, and their zeros.

Note that their zeros can be calculated as solutions of the equation $\lambda(\Lambda^t)=1$ if all the entries of
a primitive matrix $\Lambda$ are nonnegative, here for $\Lambda=\widehat{\sR^N(T)}$, see \cite[Practical considerations]{McM-HD3}. See also Remark \ref{XN}.


\end{remark}

\begin{remark}\label{CE}
 It may happen that  $\HD_{\hyp}(J(f))<\HD(J(f))$, see \cite[Subsection 2.13.2]{Lyu}, so the methods here are not adequate to estimate $\HD(J(f))$, unless e.g. $f$ is \emph{topological Collet-Eckmann}, see e.g. \cite{P-ICM}, where $P(f,t)$ has only one zero, denote it  $t_0$, and $\HD_{\hyp}(J(f))=\HD(J(f))$.

Notice that in this case at $t_0$ the derivative $dP/dt (t_0)<0$ so the convergence of approximations, say $\HD(X_N)\to t_0$ is faster than if the left derivative of  $dP(f,t)$ at $t_0$ were 0.

\end{remark}

\

{\bf Conclusions and considerations.} As noted in Section \ref{Introduction}, our aim is to  approximate the geometric pressure
$P(f,t)$ from below by quantities depending on a parameter $\delta$ for tree or $N$ for McMullen's pressures.
If approximating quantities exceed $P(f,t)$, if we do not know how far are the quantities from the limit,
we do not know how big might an error be in our estimates of $P(f,t)$ from below. In these estimates we use
$P(f,t)=P_{\tree}(f,t,z)$. 
To be safe we want also the numbers
under the $\limsup_n$, see \eqref{gtp}, be as small as possible. To this end we can choose any $z $ close to $J(f)$ but outside it, compare \cite{DGT}, hence not only non-exceptional, but not accumulated by forward trajectories  of critical points at all.
Notice that if $z_1,z_2$ are like $z$ and belong to the same component $B$ of Fatou set, then all the ratios
$\Pi_n(t,v_1)/\Pi_n(t,v_2)$ for corresponding $v_i\in f^{-n}(z_i), i=1,2$ are uniformly bounded. Coresponding  in the sense that for a curve $\gamma$ joining $z_1$ to $z_2$ in $B\setminus {\rm PC}(f)$ each $v_1$ and $v_2$ are the end points of a lift of $\gamma$ for $f^n$.
This happens for polynomials, where both $z_i$ belong to the basin of $\infty$.

As we noted in Section \ref{Introduction},
among the notions of appropriate geometric pressures we introduced
to approximate (calculate) $\HD(J(f))$ from below, the infimum (fuzzy) and/or restricted pressures are appropriate, in particular
$\widehat{P_{\rm McM}^0}(f,t)$,  $P_{\rm McM}^0(f,t)$, $P_{\rm multMcM}^0(f,t)$,
$\widehat{P^0_{\tree}}(f,t,z)$ and $P^0_{\tree}(f,t,z)$ might occur useful, since elements of the sequences defining them, depending on $\delta$ or $N$, do not exceed $P(f,t)$.



Another pressure: $\widehat{P_{\rm McM}}(f,t)$ is "almost" monotone increasing,
because of bounded distortion in the puzzle pieces satisfying \eqref{trunk-matrix}.
This distortion, responsible for possible decreasing shrinks to 0, as $N\to\infty$ with the speed depending on
$A(N)$.

Some
may increase the speed of approximation, but may lead to results exceeding $P(f,t)$, as it may happen with $P_{\rm McM}(f,t)$, see \eqref{def:McM}. This is so because of the use of
distinguished points where $|f'|$ can be too small (its inverse too large). See Remark \ref{omit-troubles}.
On the other hand, considering $P^{\rm pullinf}_{\tree}(f,t)$
requires finding infima in sets shrinking with the time of iteration,
which might be computationally awckward.

\subsection{Double sampling pressures}

A remedy to avoid the quantities exceeding $P(f,t)$ and an excessive complexity of calculations would be something between, e.g. double (or multiple) - sampling variants.

\begin{definition}\label{def:samples-tree} Define \emph{double sampling tree pressure} $P_{\tree}^*(f,t,z)$ similarly to $P_{\tree}^0(f,t,z)$
but replacing infima by minima over two points, namely 
\begin{equation}\label{limit*}
P_{\tree}^*(f,t,z):=\limsup_{\delta\to 0} P_{\tree}^{*,\delta}(f,t,z), \ \ {\rm where }
\end{equation}
\begin{equation*}
P_{\tree}^{*,\delta}(f,t,z):= \limsup_{n\to\infty} \frac1n \log
\sum_{v\in f^{-n}(z)}\Pi_n^{*\delta}(v),
\end{equation*}
where
$$
\Pi_n^{*\delta}(v):=\prod_{k=1}^n
\min (|f'(v_{k,1})|^{-t}, |f'(v_{k,2})|^{-t}),
$$
where $v_{k,1}, v_{k,2}\in B(f^{n-k}(v),\delta)$, are symmetric to each other with respect to
$v_k=f^{n-k}(v)$ and $\dist(v_{k,1},v_{k,2})\ge \delta$. Compare \eqref{hatv}.
Thus in place of one distinguished point in  $B(v_k,\delta)$, we choose two.
\end{definition}

If there is a critical point $c$ close to $v_k$, then at least  one $v_{k,i}$ is further from $c$ than $v_k$.
Even $|f'(v_{k,i})|\ge f'(v_k)|$, hence $|f'(v_{k,i})|^{-t}\le f'(v_k)|^{-t}$. At other points $v_k$, for which $\delta=o(\dist (v_k,\Crit(f)))$, this inequality holds up to a factor $1+\epsilon$, where $\epsilon=O(\delta)$.
Then $|f'(v_{k,i})|^{-t} / |f'(v_k)|^{-t} \approx 1$ for $i=1,2$.
So
\begin{equation}\label{ratio}
\frac{\Pi_n^{*\delta}(v)}{|(f^n)'(v)|^{-t}}\le 1+\epsilon,
\end{equation}
 So, taking $n\to\infty$ next summing over $v$ and taking $\delta\to 0$ we obtain
\begin{proposition}  For every non-exceptional $z$, \;
$P_{\hyp}(f,t)\le P_{\tree}^*(f,t,z)\le P_{\tree}(f,t,z)$.
\end{proposition}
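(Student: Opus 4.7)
My plan is to prove the two inequalities independently, both via pointwise control of the ratio $\min(|f'(v_{k,1})|^{-t},|f'(v_{k,2})|^{-t})/|f'(v_k)|^{-t}$.

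For the upper bound $P_{\tree}^*(f,t,z)\le P_{\tree}(f,t,z)$, the paragraphs preceding \eqref{ratio} already deliver the pointwise bound $\Pi_n^{*\delta}(v)\le(1+\epsilon)^n|(f^n)'(v)|^{-t}$ for every $v\in f^{-n}(z)$, where $\epsilon=O(\delta)\to 0$. I would simply sum over $v\in f^{-n}(z)$, take $\frac1n\log$, let $n\to\infty$ to obtain $P_{\tree}^{*,\delta}(f,t,z)\le\log(1+\epsilon)+P_{\tree}(f,t,z)$, and finally send $\delta\to 0$.

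For the lower bound $P_{\hyp}(f,t)\le P_{\tree}^*(f,t,z)$, I would fix an arbitrary $X\in\cH(f,J(f))$, exploit that $\Delta:=\dist(X,\Crit(f))>0$, and apply the Taylor expansion of \eqref{simplified}--\eqref{details} to each of the two sample points $v_{k,i}$ around $v_k$ on a backward trajectory inside $X$. This yields the reverse pointwise inequality $\min_i|f'(v_{k,i})|^{-t}\ge(1-\epsilon)|f'(v_k)|^{-t}$, with $\epsilon=O(\delta/\Delta)\to 0$, uniformly along backward orbits staying in $X$. Choose a non-exceptional $z_X\in X$, which exists because $\HD(X)>0$ while $\HD(E)=0$. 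Restricting the sum defining $P_{\tree}^{*,\delta}(f,t,z_X)$ to $v\in f^{-n}(z_X)\cap X$ (forward invariance of $X$ keeps the whole orbit inside $X$) gives
$$P_{\tree}^{*,\delta}(f,t,z_X)\ge\log(1-\epsilon)+P_{\tree}(f|_X,t,z_X)=\log(1-\epsilon)+P(f|_X,-t\log|f'|),$$
the last equality by Remark \ref{easy}. Letting $\delta\to 0$ and then taking $\sup$ over $X$ yields $P_{\tree}^*(f,t,z_X)\ge P_{\hyp}(f,t)$.

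To pass from the specific $z_X\in X$ to the arbitrary non-exceptional $z\in\ov\C$ stated in the proposition, I would invoke the $z$-independence strategy of \cite{P-TAMS2, PRS} recalled in the proof of Theorem \ref{equality-sup-pressure}, item 1: join $z$ to $z_X$ by a curve in $\ov\C$ not fast-accumulated by ${\rm PC}(f)$, pull back via inverse branches of $f^n$ along this curve using Koebe distortion, and put the backward trees of $z$ and $z_X$ into a correspondence with subexponential-in-$n$ multiplicative error. The main obstacle is checking that the symmetric-pair sampling data $\{v_{k,1},v_{k,2}\}$ (symmetric around $v_k$ with $\dist(v_{k,1},v_{k,2})\ge\delta$) transfers through these inverse branches to admissible pairs on the shadowed side; this reduces to quasi-symmetry of conformal maps with bounded Koebe distortion and, after letting $\delta\to 0$, gives $P_{\tree}^*(f,t,z)\ge P_{\tree}^*(f,t,z_X)\ge P_{\hyp}(f,t)$.
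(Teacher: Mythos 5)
Your proof follows essentially the same route as the paper: the upper bound comes from the pointwise estimate \eqref{ratio} (by symmetry at least one of the two sample points is at least as far from a nearby critical point as $v_k$, and away from $\Crit(f)$ the ratio is $1+O(\delta)$), and the lower bound from restricting the backward tree to a hyperbolic set $X$ bounded away from $\Crit(f)$, exactly as in the proofs of Theorems \ref{McM-tree} and \ref{hatP0}. The only place where you go beyond the paper is the transfer of the lower bound from the special point $z_X\in X$ to an arbitrary non-exceptional $z$; the paper leaves this implicit, and your curve/Koebe sketch is the natural way to fill it, though matching the symmetric sampling pairs across the inverse branches (whose images at depth $k$ need not stay within $\delta$ of each other for fixed $\delta$) would still have to be written out carefully.
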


Unfortunately we cannot prove the monotone increasing of $P^{*,\delta}_{\tree}(f,t,z)$ as $\delta\to 0$,
nor $P_{\tree}^{*,\delta}(f,t,z) \le P_{\tree}^{*}(f,t,z)$.
For $v_k$ close to a critical point $c$, if $\dist(v_k,c)=C\delta$, we have
$\dist (v_{k,i},c)\ge \delta \sqrt{C^2+1/2}=
\dist(v_k,c) \frac{\sqrt{C^2+1/4}}{C}$ for $i=1$ or 2.
Hence
$$
\frac{|f'(v_{k,i})|^{-t}} { f'(v_k)|^{-t}}\le  \Bigl(\frac{C}{\sqrt{C^2+1/4}}\Bigr)^{t\nu(c)} (1+O(\delta)).
$$
However  we cannot achieve this gain far from $\Crit(f)$. A remedy would be to consider
\emph{triple sampling tree pressures}, with $v_{k,i}, i=1,2,3$, at the vertices of a equilateral triangle
centered at $v_k$. Then $f'$ in a small neighbourhood of $v_k$ is almost affine, so the inequality
$|f'(v_{k,i})|\ge f'(v_k)|$ holds for some $i$ provided there is no point $x_0$ where $f''(x)=0$.
If the latter case takes place, assume $f'(x)=a(x-x_0)^{m(x)} + ...$ for $a\not=0$ and an integer
$ m(x)>1$. To cope with this case if it happens, consider
\emph{m-sampling tree pressure}, with $v_{k,i}, i=1,...,m$ at the vertices of a regular $m$-gon centered at $v_i$. Then, for $m=3\max\{m(x): f''(x)=0\}$, there exists $i$ such that $|f'(v_{k,i})|\ge |f'(v_k)|$.
So we can skip $\epsilon$ in \eqref{ratio} and then conclude with
$
P_{\tree}^{*,\delta}(f,t,z) \le P(f,t)
$ for each $\delta$ for $m$-sampling tree pressure.

\medskip

\begin{definition}[double sampling McMullen's pressure]
Similarly we define \emph{double sampling McMullen's pressure} $P_{\rm McM}^*(f,t)$.
For 
$P=P_{N,i}$ for which
$\Int P_{N,i}\cap f^{-1}(\Int P_{N,j})\not=\emptyset$
we consider two points $v_{P,1},v_{P,2}\in
B(P, r_{N,P})$ where $r_{N,P}:=A(N)(\diam P))$ for an arbitrary sequence
$A(N)\to\infty$ as
$N\to\infty$, but  $A(N) \diam ( {\sR}^N(\cP))\to 0$, compare \eqref{trunk-matrix}.
The points $v_{P,1}, v_{P,2}$ are chosen
symmetric with respect to an arbitrary point $z^*_P$ in $P$ and

\noindent $\dist (v_{P,1},v_{P,2}) \approx r_{N,P}$ (i.e. far from $P$ compared to its diameter).
We distinguish such a  pair  only for $P$ such that $\dist (P,\Crit(f))\le r_{N,P}$. In this case we need to do so because of the arbitrariness of the choices of $z^*_P$.


Now define the matrices ${\sR}^N(T)^*$ by changing in ${\sR}^N(T)$ defined at the beginning of
Section \ref{sec:McM} the entries $|f'(y_{N,i})|^{-1}$
with distinguished points $y_{N,i}$ to $a_{ij}=\min (  |f'(v_{P,1})|^{-1},     |f'(v_{P,2})|^{-1}       )$ for $P$ close to $\Crit(f)$ as above.
Finally define
\begin{equation}\label{def:*}
P_{\rm McM}^*(f,t)=\lim_{N\to\infty} \log \lambda ((  \sR^N(T)^*)^t).
\end{equation}
\end{definition}

\medskip

Consider now an arbitrary non-exceptional $z\in J(f)$ not belonging to  the boundary of any puzzle piece of any generation. Then for each $z_k\in f^{-k}(z) \in P_{N,i}$, we have $a_{ij}^t\le |f'(z_k)|^{-t}  (1+\epsilon)$,
where $\epsilon \to 0$ as $N\to\infty$, compare \eqref{ratio}.
So indeed it holds:

\begin{proposition}\label{prop:P*MCM}
$P_{\hyp}(f,t)\le P_{\rm McM}^*(f,t)\le P_{\tree}(f,t,z)=P_{\tree}(f,t)$.
\end{proposition}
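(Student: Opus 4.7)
The plan is to handle the three parts of the chain in sequence. The rightmost equality $P_{\tree}(f,t,z)=P_{\tree}(f,t)$ is immediate from Theorem \ref{equality-sup-pressure} item 1, which gives independence of $P_{\tree}(f,t,z)$ from the non-exceptional basepoint. For the leftmost inequality $P_{\hyp}(f,t)\le P_{\rm McM}^*(f,t)$, I would mimic the corresponding step of Theorem \ref{McM-tree}: pick $X\in\cH(f,J(f))$, so $X$ is uniformly bounded away from $\Crit(f)$; for $N$ large, every puzzle piece $P$ meeting $X$ satisfies $\dist(P,\Crit(f))>r_{N,P}$, whence the entry of $\sR^N(T)^*$ at the corresponding position coincides with the classical entry $|f'(y_{N,i})|^{-1}$. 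Lipschitz continuity of $\log|f'|$ on a uniform neighborhood of $X$ allows one to replace $|f'(y_{N,i})|^{-1}$ by $|f'(x)|^{-1}$ for any $x\in P\cap X$, up to a factor $1+o(1)$ as $N\to\infty$. Restricting to the submatrix indexed by pieces meeting $X$ recovers $P(f|_X,-t\log|f'|)$ in the limit, after which I would take the supremum over $X$.

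The main work is the middle inequality $P_{\rm McM}^*(f,t)\le P_{\tree}(f,t,z)$ for a non-exceptional $z\in J(f)$, which may be taken in the interior of a puzzle piece of every generation. Fix $N$ and let $i_0$ satisfy $z\in\Int P_{N,i_0}$. By primitivity (cf.~\eqref{entry}), $\lambda(\sR^N(T)^*)^t=\lim_n\bigl(((\sR^N(T)^*)^t)^n\bigr)_{i_n,i_0}^{1/n}$, and the matrix entry equals $\sum\prod_{k=1}^n a_{i_k,i_{k-1}}^t$ summed over admissible sequences $(i_0,\ldots,i_n)$. Each admissible sequence corresponds to at least one preimage $v\in f^{-n}(z)$ with $z_k:=f^{n-k}(v)\in\Int P_{N,i_k}$. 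If the pointwise bound $a_{i_k,i_{k-1}}^t\le|f'(z_k)|^{-t}(1+\epsilon_N)$ with $\epsilon_N\to 0$ holds for every such $v$, summing over admissible sequences is dominated (one $v$ per sequence) by $(1+\epsilon_N)^n\sum_{v\in f^{-n}(z)}|(f^n)'(v)|^{-t}$, and the desired inequality follows on taking $n$-th roots, $\limsup_n$, and then $N\to\infty$.

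The pointwise bound splits into two cases. When $\dist(P,\Crit(f))>r_{N,P}$ for $P=P_{N,i_k}$, the entry is the classical $|f'(y_{N,i_k})|^{-1}$, and Lipschitz continuity of $\log|f'|$ on a definite neighborhood of $P$ gives the bound. When $\dist(P,\Crit(f))\le r_{N,P}$, let $c$ be the nearby critical point. The parallelogram identity, together with the symmetry of $v_{P,1},v_{P,2}$ about $z^*_P$ and $\dist(v_{P,1},v_{P,2})\approx r_{N,P}$, yields at least one sample $v_{P,i^*}$ with $|v_{P,i^*}-c|\ge|z^*_P-c|$, hence $|f'(v_{P,i^*})|^{-t}\le|f'(z^*_P)|^{-t}$ via the local expansion $|f'(w)|\sim|w-c|^{\nu(c)-1}$ of \eqref{details}. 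Choosing $z^*_P$ to (nearly) realize $\dist(P,c)$ and using $|z_k-z^*_P|\le\diam(P)$ together with $r_{N,P}=A(N)\diam(P)$ and $A(N)\diam(\sR^N(\cP))\to 0$, one transfers the comparison from $z^*_P$ to $z_k$, obtaining $a_{i_k,i_{k-1}}^t\le|f'(z_k)|^{-t}(1+\epsilon_N)$ with $\epsilon_N=O(A(N)^{-1})$.

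The main obstacle is precisely this last step: the symmetric sampling is about $z^*_P$ rather than about the actual preimage $z_k$, so the parallelogram bound controls $|f'|$ at $v_{P,i^*}$ relative to $z^*_P$, not to $z_k$. A judicious choice of $z^*_P$ close to the nearby critical point, combined with the quantitative smallness of $\diam(P)/\dist(P,\Crit(f))$ forced by $A(N)\to\infty$, is needed to pass from $z^*_P$ to $z_k$ without losing an $n$-dependent factor. Without this, as warned in Remark \ref{omit-troubles}, the matrix entries may overshoot $P(f,t)$ and the inequality would fail.
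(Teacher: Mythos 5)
Your reduction of the middle inequality to the pointwise bound $a_{i_k,i_{k-1}}^t\le|f'(z_k)|^{-t}(1+\epsilon_N)$, and your treatment of the two outer relations, follow the paper's route. The gap is exactly where you locate it: the near-critical case of that pointwise bound is left unproven, and the fix you sketch does not work. First, $z^*_P$ is an \emph{arbitrary} point of $P$ in the definition, so you are not free to choose it; the proposition has to hold for every admissible choice. Second, placing $z^*_P$ so as to nearly realize $\dist(P,c)$ is the worst choice for your transfer step: it makes $|f'(z^*_P)|^{-1}$ as large as possible, and the ratio $|f'(z^*_P)|^{-1}/|f'(z_k)|^{-1}\sim\bigl(|z_k-c|/|z^*_P-c|\bigr)^{\nu-1}$ is then unbounded when $\dist(P,c)\ll\diam P$ (e.g. $c\in\partial P$).

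The step closes once you use the parallelogram identity at full strength instead of only its consequence $\max_i|v_{P,i}-c|\ge|z^*_P-c|$. By the symmetry of $v_{P,1},v_{P,2}$ about $z^*_P$,
\begin{equation*}
\max_{i}|v_{P,i}-c|^2\ \ge\ |z^*_P-c|^2+\tfrac14\,\dist(v_{P,1},v_{P,2})^2\ \ge\ \bigl(|z_k-c|-\diam P\bigr)_+^2+\tfrac14\, r_{N,P}^2(1-o(1)),
\end{equation*}
while $|z_k-c|\le\dist(P,c)+\diam P\le r_{N,P}+\diam P$. Since $r_{N,P}=A(N)\diam P\gg\diam P$, the term $\tfrac14 r_{N,P}^2$ absorbs the loss $2|z_k-c|\diam P\le 2(r_{N,P}+\diam P)\diam P$, and one gets $\max_i|v_{P,i}-c|\ge|z_k-c|$ for all $N$ with $A(N)$ large enough, \emph{uniformly in the choices of} $z^*_P\in P$ \emph{and} $z_k\in P$. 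Combined with \eqref{details} this gives $\min_i|f'(v_{P,i})|^{-t}\le|f'(z_k)|^{-t}\bigl(1+O(r_{N,P})\bigr)$, which is the bound you need; no special placement of $z^*_P$ and no control on the frequency of near-critical times is required. (The bounded-constant-plus-rarity argument of Definition \ref{sample-in-P} is needed only for the variant with $v_{P,1},v_{P,2}\in\Int P$, where the sample separation is only comparable to $\diam P$; your worry that the entries might overshoot, cf. Remark \ref{omit-troubles}, is pertinent for that variant but not for the definition as stated.)
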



\begin{definition}\label{sample-in-P}
In fact we could distinguish $v_{P,1}, v_{P,2}\in \Int P$. Indeed, assume $d:=\dist(v_{P,1}, v_{P,2})\ge \frac12 \diam P$. So $\dist (z_k, v_{P,\iota})\le 2d$ for $z_k\in P$ for both $\iota=1$ and $\iota=2$. But $\dist (c,  v_{P,\iota})\ge d/2$
for $\iota =1$ or $\iota=2$ for each point $c$ close to $P$, in particular a critical one. So for such $\iota$
we get due to the triangle inequality, skipping indices,
$ \dist(c,z)\le \dist (c,v) + \dist( v, z)$
$$
\frac{  \dist(c,z)} {\dist(c,v)} \le 1 + \frac{\dist (v,z)}{ \dist( c,v)} \le 1 + \frac {2d}{d/2}\le 5.
$$
So, for $\nu$ denoting the multiplicity of $f$ at $c$,
$$
\frac { |f'(z_k)|^{-1}}{|f'(v_{P,\iota})|^{-1}} \ge \Const 5^{-(\nu-1)}, \ \ \ {\rm hence}
\ \ \ |f'(v_{P,\iota})|^{-1} \le \Const^{-1} 5^{\nu -1} |f'(z_k)|^{-1}.
$$


\noindent Since this occurrence happens rarely with $N$ large the constant $\Const^{-1} 5^{\nu -1}$ does not matter.

\end{definition}

Note that the shape of $P$ can be very distorted, making finding above $v_{P,1}, v_{P,2}\in \Int P$ difficult.
So instead we can consider in the definition taking $v_{P,1}, v_{P,2}\in B(P, \diam P)$.

Unfortunately these constructions do not allow to prove neither monotonicity nor that the elements
of the sequence in \eqref{def:*} do not exceed $P(f,t)$ (though discrepancies seem low), unless we modify the definition to an
\emph{m-sampling McM-pressure}, as in the tree pressure case in Definition \ref{def:samples-tree}.
\medskip








\subsection{Examples}

\begin{example}\label{ex:1} For each non-renormalizable polynomial $f$, say with connected Julia set and all periodic orbits repelling, one considers Yoccoz puzzle construction, namely a covering $\cP$ of a neighbourhood of $J(f)$
 whose pieces have boundaries consisting of equipotential lines  for the Green's function in the basin of $\infty$ and external radii to fixed points dissecting $J(f)$. Assume these points are not postcritical. Diameters of consecutive pullbacks of these pieces shrink uniformly to 0, so the assumptions of Theorem \ref{equality-sup-pressure}, item 2, are satisfied.  See e.g. \cite{KvS} for this and more general cases.
 \end{example}

\begin{remark} There is no need in the definition of McMullen's pressures that the consecutive
puzzle structures  are of the form $\sR^N(\cP)$. One can just take any sequence $\cP_N$ of puzzle structure coverings such that diameters of their elements tend uniformly to 0. Then in Example 1 one may allow infinitely renormalizable case, with so-called \emph{a priori complex bounds} condition, allowing to find such a sequence, see \cite{Lyu} and references therein.
\end{remark}

\begin{example}\label{ex:2} For the Feigenbaum map $f_{\rm Feig}(z)=z^2+c_{\rm Feig}$ where $c\approx -1.40155$,
infinitely renormalizable, where $c_{\rm Feig}$ is the limit of the decreasing sequence of the period doubling real parameters, a different puzzle structure is used, see \cite{DS} and references therein. The critical point 0
is in the boundary of four first generation puzzle pieces adjacent at it, so all restrictions of $f_{\rm Feig}$
to all generations puzzle pieces are injective, the integer parameter $s$ is not needed, see beginning of Section \ref{sec:McM}. In fact in
\cite{DGT} it is announced that for $t=\HD_{\hyp}(J(f))$ it holds $\log\lambda^0_{N,t}\to \delta^0_{\rm cr}$ as $N\to\infty$, with $\lambda^0_{N,t}$ defined in our
Definition \ref{fuzzyMcM} and $\delta_{\rm cr}$ being the critical exponent of Poincar\'e series. The latter
is equal to $\HD(J(f))$ and Minkowski dimension (box dimension) of $J(f)$ provided the area of $J(f)$ id zero
(which is the case by \cite{DS} for $f_{\rm Feig}(z)$),
for the latter see \cite{Bishop}. (There, Witney critical exponent appears, but it is straightforward that it is
Minkowski dimension of $J(f)$, by Koebe's $1/4$ lemma. In \cite{Bishop}, actions by Kleinian groups were  considered.)


\noindent See \cite{DS}, Figures 1,2 and a precise description there.





\end{example}








\smallskip














\begin{thebibliography}{11}

\bibitem[Bishop]{Bishop} Ch. Bishop, \emph{Minkowski dimension and the Poincar\'e exponent},
Michigan Math. J. 43.2 (1996), 231 -- 246.

\bibitem[Bowen]{Bowen} R. Bowen, \emph{Hausdorff dimension of quasi-circles}, Publ. math. IHES 50 (1976), 11 -- 26.

\bibitem[DS]{DS} A. Dudko, S. Sutherland, \emph{On the Lebesgue measure of the Feigenbaum Julia set}, Invent. math.  221.1 (2020), 167 -- 202.

\bibitem[DGT]{DGT} A. Dudko, I. Gorbovickis, W. Tucker, In preparation.

\bibitem [KvS]{KvS} O. Kozlovski, S. van Strien, \emph{Local connectivity and quasi-conformal rigidity of non-renormalizable polynomials}, Proc. London Math. Soc. (3) 99 (2009), 275 -- 296.

\bibitem[Lyu]{Lyu} M. Lyubich, \emph{Analytic low-dimensional dynamics: from dimension one to two} Proc. ICM, Seoul 2014, Vol. I, 443 -- 474.

\bibitem[McM-HD3]{McM-HD3} C. McMullen, \emph{Hausdorff dimension and conformal dynamics. III. Computation of dimension.}  Amer. J. Math. 120 (1998), no. 4, 691 -- 721.

\bibitem[Ore]{Ore} O. Ore, Theory of Graphs, AMS Colloquium Publications, Vol. 38 (1962).

\bibitem[P-PAMS]{P-PAMS} F.~Przytycki, \emph{Lyapunov characteristic exponents are nonnegative}, Proc. Amer. Math. Soc.  \textbf{119}  (1993), 309 -- 317.

\bibitem[P-TAMS2]{P-TAMS2} F.~Przytycki, \emph{Conical limit set and Poincar\'e exponent for iterations of rational functions}, Trans. Amer. Math. Soc. \textbf{351} (1999), 2081 -- 2099.

\bibitem[P-ICM]{P-ICM} F. Przytycki, \emph{Thermodynamic formalism methods in one-dimensional real and complex dynamics},
Proceedings of the International Congress of Mathematicians 2018, Rio de Janeiro, Vol.2, pp. 2081--2106.

\bibitem[P-Perron]{P-Perron} F.~Przytycki, \emph{On the Perron-Frobenius-Ruelle operator for rational maps on the Riemann sphere and for H\"older continuous functions}, Bull. Braz. Math. Soc.   20.2 (1990), 95 -- 125.


\bibitem[PRS]{PRS} F.~Przytycki, J.~Rivera-Letelier, S.~Smirnov, \emph{Equality of pressures for rational functions}, Ergodic Theory Dynam. Systems \textbf{24} (2004), 891 -- 914.


\bibitem[PUbook]{PUbook} F.~Przytycki, M.~Urba\'nski, \emph{Conformal Fractals: Ergodic Theory Methods}, London Mathematical Society Lecture Note Series 371, Cambridge University Press, 2010.

 \bibitem[Walters]{Walters} P. Walters, \emph{An Introduction to Ergodic Theory}, Springer-Verlag, 1982.

\end{thebibliography}
\end{document}